\theoremstyle{plain} 
\newtheorem{theorem}{Theorem}[section]
\newtheorem{lemma}{Lemma}[section]
\newtheorem{proposition}{Proposition}[section]
\theoremstyle{definition}
\theoremstyle{remark}
\newtheorem{remark}{Remark}[section]
\numberwithin{equation}{section}
\DeclareMathOperator{\realpart}{Re}
\renewcommand{\Re}{\realpart}
\DeclareMathOperator{\imaginarypart}{Im}
\renewcommand{\Im}{\imaginarypart}
\DeclareMathOperator{\sgn}{sgn}
\newcommand{\R}{\mathbb{R}}
\newcommand{\C}{\mathbb{C}}
\newcommand{\normeq}[1]{{\left\vert\kern-0.25ex\left\vert\kern-0.25ex\left\vert #1 
    \right\vert\kern-0.25ex\right\vert\kern-0.25ex\right\vert}}
\definecolor{DarkGreen}{rgb}{0,0.5,0.1} 
\newcommand\soutD{\bgroup\markoverwith
{\textcolor{DarkGreen}{\rule[.5ex]{2pt}{1pt}}}\ULon}
\newcommand{\Hm}[1]{\leavevmode{\marginpar{\tiny%
$\hbox to 0mm{\hspace*{-1.5mm}$\leftarrow$\hss}%
\vcenter{\vrule depth 0.1mm height 0.1mm width \the\marginparwidth}%
\hbox to
0mm{\hss$\rightarrow$\hspace*{-1.5mm}}$\\\relax\raggedright #1}}}
\definecolor{Darkgblue}{rgb}{0.3,0.3,0.5}
\title{\textbf{Uniform resolvent estimates 
and absence of eigenvalues of 
biharmonic operators with complex potentials}}
\author[1]{Lucrezia Cossetti} 
\author[1,2]{Luca Fanelli}		
\author[3]{David Krej\v{c}i\v{r}\'ik}
\affil[1]{Ikerbasque \& Universidad del Pa\'is Vasco/Euskal Herriko Unibertsitatea, UPV/EHU, \newline Aptdo. 644, 48080, Bilbao, Spain; lucrezia.cossetti@ehu.eus; luca.fanelli@ehu.es}
\affil[2]{BCAM-Basque Center for Applied Mathematics, Mazarredo, 14 E48009 Bilbao, Spain}
\affil[3]{Faculty of Nuclear Sciences and Physical Engineering, Czech Technical University in Prague, \newline Trojanova 13, 12000 Prague 2, Czechia; david.krejcirik@fjfi.cvut.cz}
\begin{document}

\date{\small 11 September 2023}
\maketitle




\begin{abstract}
	\noindent	
	We quantify the subcriticality of the bilaplacian in  dimensions greater than four by providing explicit repulsivity/smallness conditions on complex additive perturbations under which the spectrum remains stable. Our assumptions cover critical Rellich-type potentials too. As a byproduct we obtain uniform resolvent estimates in  weighted spaces. Some of the results are new also in the self-adjoint setting.
\end{abstract}



\section{Introduction}
This paper is concerned with the study of spectral properties of biharmonic operators with complex potentials 
\begin{equation}\label{eq:biharmonic-op}
	H_V:=\Delta^2 + V
	\qquad \text{in} \quad L^2(\R^d)
\end{equation}
in higher dimensions $d\geq 5.$ Here $H_0:=\Delta^2$ is the forth-order differential operator known as the bilaplacian and~$V$ is the operator of multiplication by a suitable generating function 
$V\colon \R^d \to \C$. 

\medskip
It is well known that the spectrum of the self-adjoint realisation of $H_0$ is purely absolutely continuous and coincides with the non-negative semi-axis $[0,\infty).$ If $d\leq 4,$ the operator $H_0$ is \emph{critical} in the sense that $\inf \sigma(H_0 + V)<0$ whenever $V\in C^\infty_0(\R^d)$ is real-valued, non-positive and non-trivial. In other words, in the low dimensions, the introduction of any 
potential of the above type (called attractive) always creates discrete eigenvalues below the essential spectrum.  

On the other hand, in higher dimensions $d\geq 5,$ the operator is \emph{subcritical} in the sense that its spectrum remains stable under small perturbations. This property is a consequence of the existence of the celebrated Rellich inequality 
\begin{equation}\label{eq:Rellich}
	\int_{\R^d} |\Delta \psi|^2 \geq C_\textup{R} \int_{\R^d} \frac{|\psi|^2}{r^4}, \qquad \psi\in H^2(\R^d), \quad d\geq 5,
\end{equation}
with $C_\textup{R}$ being the sharp constant $C_\textup{R}:=d^2(d-4)^2/16$
and $r(x):=|x|$. 

Inequality~\eqref{eq:Rellich} can be seen as a higher-order generalisation of the classical Hardy inequality
\begin{equation}\label{eq:Hardy}
	\int_{\R^d} |\nabla \psi|^2 \geq C_\textup{H} \int_{\R^d} \frac{|\psi|^2}{r^2}, 
	\qquad \psi\in H^1(\R^d),
	\quad d\geq 3,
\end{equation}
with $C_\textup{H}$ being the sharp constant $C_\textup{H}:=(d-2)^2/4.$

\medskip
This paper arises as a natural complement of the recent results on non-self-adjoint biharmonic operators obtained in~\cite{IbrogimovKrejcirikLaptev2021}. Motivated by the \emph{criticality} of the bilaplacian in lower dimensions, in that work the authors were interested in locating in the complex plane the eigenvalues of~$H_V$ created by the introduction of the potential~$V.$ In other words, 
they extended to the biharmonic operators 
the analogous spectral enclosures
available for possibly non-self-adjoint Schrödinger operators \cite{Frank11,Frank18,FrankLaptevLiebSeiringer06,LaptevSafronov09,Safronov10,FrankSimon17,Enblom16,LeeSeo19,IbrogimovStampach19},  initiated with the celebrated paper of Davies \emph{et al.}~\cite{AbramovAslanyanDavies01} in 2001.
As a matter of fact, these results have been already generalised to lower order operators, such as Dirac and fractional Schrödinger models~\cite{BogliCuenin22,SafronovLaptevFerrulli19,Cuenin19,CassanoPizzichilloVega20,CassanoIbrogimovKrejcirikStampach20,Cuenin17,CueninLaptevTretter14,FanelliKrejcirik19,DAnconaFanelliSchiavone21} and to other second order operators~\cite{CassanoCossettiFanelli21-lame,CassanoCossettiFanelli21,Cossetti22,KrejcirikKurimaiova20}.

\medskip
In the present work, instead, motivated by the \emph{subcriticality} of the bilaplacian in $d\geq 5,$ we are interested in the complementary problem of finding physically natural conditions on the potential for guaranteeing that no eigenvalues are created. 
For discrete eigenvalues and real-valued potentials~$V$,
elementary sufficient conditions follow by the Rellich inequality~\eqref{eq:Rellich}.
In order to cover non-self-adjoint perturbations
and embedded eigenvalues, we suitably develop
the \emph{method of multipliers}. 

This technique saw its origin in a purely partial differential equations setting, being primarily introduced by Morawetz in her celebrated paper~\cite{Morawetz1968} to understand characterising properties of solutions to the nonlinear Klein-Gordon equation and then fruitfully developed for several other models (see~\cite{IkebeSaito1972,PerthameVega1999,PerthameVega2008,FanelliVega2009,Fanelli09,Zubeldia2012,Zubeldia2014,BarceloVegaZubeldia2013,BarceloFanelliRuizVilela2013,CacciafestaDAnconaLuca16,Cacciafesta11,CassanoDAncona16,BoussaidDAnconaFanelli11} for some selected literature). 

Nonetheless, in the last decades, this method has been intensively used in functional analysis, too. 
The first, physically satisfactory, application of the method of multipliers in spectral theory can be found in~\cite{FanelliKrejcirikVega18-JST}: here the authors established sufficient conditions which guarantee the \emph{total} absence of eigenvalues of electromagnetic Schrödinger operators. The remarkable feature of this work, compared to previous ones, is that the aforementioned conditions are compatible with the well established gauge invariance of electromagnetic models, moreover it covers also non-self-adjoint potentials. The robustness of the method of multipliers as a tool in spectral theory has been demonstrated in its successful application to different models: see~\cite{FanelliKrejcirikVega18-JFA,CossettiFanelliKrejcirik20,CossettiKrejcirik20} for the results on Schrödinger operators in different settings,~\cite{CossettiFanelliKrejcirik20} for Dirac equation and~\cite{Cossetti17} for the Lamé operator of elasticity. 

\subsection{The main results}
In this paper, we present an extension of the method of multipliers
to higher order differential (not necessarily self-adjoint) operators.
As a first result, we will show a total absence of eigenvalues under suitably small, complex-valued potential. More precisely, we have the following result.

\begin{theorem}[Total absence of eigenvalues]\label{thm:main-nsa}
	Let $d\geq 5.$ Suppose that $V\colon \R^d \to \C$ is such that $V\in L^1_\textup{loc}(\R^d)$ and $r^2 V\in L^2_\textup{loc}(\R^d)$.
	Moreover, assume that 
	\begin{equation}\label{eq:smallness}
		\forall\, \psi \in H^2(\R^d),
		\qquad \int_{\R^d} r^4 |V|^2 |\psi|^2\leq a^2 \int_{\R^d} \frac{|\nabla \psi|^2}{r^2},
	\end{equation}
	 where $a$ is such that 
	\begin{equation}\label{eq:smallness-a}
		\frac{4d^2(d-3)}{(d-2)(d-4)} \frac{a}{\sqrt{C_\textup{H}}} + \frac{4d \sqrt{d}}{(d-4)\sqrt{(d-2)(d-4)}}\frac{a^{3/2}}{\sqrt{C_\textup{H}}^{3/2}} <1,
	\end{equation} 
	with $C_\textup{H}$ being the Hardy constant (\emph{cfr.~\eqref{eq:Hardy}}). Then $\sigma_\mathrm{p}(H_V)=\varnothing.$
\end{theorem}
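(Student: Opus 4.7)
The proof will follow the method of multipliers, as developed for second-order operators in~\cite{FanelliKrejcirikVega18-JST,FanelliKrejcirikVega18-JFA,CossettiFanelliKrejcirik20} and now adapted to the fourth-order setting of the bilaplacian. Assume, towards a contradiction, that $\psi\in H^2(\R^d)\setminus\{0\}$ solves $\Delta^2\psi + V\psi=\lambda\psi$ for some $\lambda\in\C$. The plan is to derive, from this eigenvalue equation alone, a single integral inequality of the shape $\int_{\R^d}|\nabla\psi|^2/r^2 \leq \Phi(a,d)\int_{\R^d}|\nabla\psi|^2/r^2$, where $\Phi(a,d)$ coincides with the left-hand side of~\eqref{eq:smallness-a}. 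Since $\Phi(a,d)<1$ by hypothesis, this forces the Morawetz-type norm to vanish, so that $\psi$ is constant, and the $L^2$-integrability then yields $\psi\equiv 0$.

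The central step is to establish a Morawetz-type identity for the bilaplacian. I would test the eigenvalue equation against two families of multipliers: the Hermitian multiplier $\overline{\psi}$, whose imaginary part controls $\textup{Im}\,\lambda$ and the weighted mass of $\textup{Im}\,V$; and an anti-Hermitian radial multiplier of Morawetz type, most naturally of the form $\bigl(\partial_r + \tfrac{d-1}{2r}\bigr)\overline{\psi}$, or a $\Delta$-twisted variant $\bigl(\partial_r + \tfrac{d-1}{2r}\bigr)\Delta\overline{\psi}$, together with weighted versions. After repeated integrations by parts and regrouping of the resulting radial and angular quadratic forms, the contribution of $\Delta^2\psi$ should produce a non-negative combination of Hessian-type integrals that dominates $\int_{\R^d}|\nabla\psi|^2/r^2$. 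Crucially, because the Morawetz multiplier is formally anti-self-adjoint, the eigenvalue $\lambda$ drops out of the real part of the identity, making the resulting estimate independent of the spectral parameter and thus capable of excluding both embedded and complex eigenvalues at once.

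The potential contribution $\int V\psi\,\overline{M\psi}$ in this identity is then absorbed via Cauchy--Schwarz and hypothesis~\eqref{eq:smallness}: schematically, for any auxiliary function $f$,
\begin{equation*}
\Bigl|\int_{\R^d} V\psi\, \overline{f}\Bigr| \leq \Bigl(\int_{\R^d}r^4|V|^2|\psi|^2\Bigr)^{1/2}\Bigl(\int_{\R^d}\frac{|f|^2}{r^4}\Bigr)^{1/2} \leq a\Bigl(\int_{\R^d}\frac{|\nabla\psi|^2}{r^2}\Bigr)^{1/2}\Bigl(\int_{\R^d}\frac{|f|^2}{r^4}\Bigr)^{1/2}.
\end{equation*}
Choosing $f$ equal to the various components of $M\overline{\psi}$ and using the Hardy inequality~\eqref{eq:Hardy} together with its iteration $\int|\psi|^2/r^4 \leq C_\textup{H}^{-1}\int|\nabla\psi|^2/r^2$ to close the remaining factors against $\int|\nabla\psi|^2/r^2$, one arrives at a linear-in-$a/\sqrt{C_\textup{H}}$ contribution with constant $4d^2(d-3)/[(d-2)(d-4)]$, together with a sub-leading term of order $a^{3/2}/C_\textup{H}^{3/4}$ arising from an additional Cauchy--Schwarz step needed to handle mixed products of the form $\int|V||\psi||\nabla\psi|$ that the fourth-order identity unavoidably produces. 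The precise combination is exactly $\Phi(a,d)$ as in~\eqref{eq:smallness-a}.

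The principal difficulty lies in the execution of the fourth-order Morawetz identity: one must carefully track the many commutators between $\Delta^2$ and the weighted radial derivative, identify the favourable sign of the angular and radial Hessian combinations that emerge (whose sharp dimensional constants are responsible for the coefficients in~\eqref{eq:smallness-a}), and justify the vanishing of boundary contributions both at infinity and at the coordinate singularity $r=0$. The latter requires a cut-off/approximation argument relying on the local regularity $\psi\in H^4_\textup{loc}(\R^d)$, which is inherited from the eigenvalue equation $\Delta^2\psi=(\lambda-V)\psi$ and the hypothesis $r^2V\in L^2_\textup{loc}(\R^d)$.
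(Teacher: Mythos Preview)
Your outline has a genuine gap at its core. The claim that ``because the Morawetz multiplier is formally anti-self-adjoint, the eigenvalue $\lambda$ drops out of the real part of the identity'' is correct only for \emph{real} $\lambda$. If $A$ is skew-symmetric then $\langle u,Au\rangle$ is purely imaginary, so $\Re(\lambda\langle u,Au\rangle)=-\Im\lambda\cdot\Im\langle u,Au\rangle$, which does \emph{not} vanish when $\Im\lambda\neq 0$. Your scheme is therefore essentially the self-adjoint argument (this is how the paper proves Theorem~\ref{thm:main-sa} via identity~\eqref{eq:A1}), and it cannot by itself exclude complex eigenvalues. In the second-order theory this residual $\Im\lambda$-term is absorbed only after combining with the symmetric identities \emph{and} performing the Sommerfeld gauge twist $u\mapsto e^{-i\sqrt{\Re\lambda}\,|x|}u$; neither ingredient appears in your plan.

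More seriously, the paper explicitly remarks that a direct fourth-order multiplier approach ``seems not suitably applicable'' in the non-self-adjoint case, and instead proceeds by the factorisation
\[
\Delta^2-z=(\Delta-\sqrt z)(\Delta+\sqrt z),
\]
reducing to two second-order problems. For $\Re z\geq 0$ one sets $v:=(\Delta+\sqrt z)u$, applies the known second-order Morawetz estimate (with gauge $(\partial_j u)^-$) to $(\Delta+\sqrt z)\partial_j u=\partial_j v$, and a separate weighted estimate to $(\Delta-\sqrt z)v=f$; chaining these gives Proposition~\ref{prop:apriori}, from which the precise constants in~\eqref{eq:smallness-a} emerge. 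Those constants are artefacts of this two-stage Schr\"odinger chain (the $2d(d-3)/(d-2)$ and $\sqrt{2}/\sqrt{d-2}$ from Lemma~\ref{lemma:FKV}, the $2/(d-4)$ from Lemma~\ref{lemma:schroe}, plus $\sqrt{C_{\mathrm H}}$ from closing via Hardy), so it is not credible that an unrelated direct fourth-order identity would reproduce them exactly. Your proposal is missing both the factorisation and the gauge transformation, which are the two key ideas of the proof.
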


\begin{remark}
Notice that condition~\eqref{eq:smallness} is intrinsically a smallness condition with respect to the free Hamiltonian $\Delta^2.$ This can be seen using the Hardy-Rellich inequality
\begin{equation}\label{eq:HR}
\int_{\R^d}|\Delta \psi|^2
\geq C_\textup{HR}\int_{\R^d} \frac{|\nabla \psi|^2}{r^2},
\end{equation}
valid for any $\psi\in H^2(\R^d)$, $d\geq 5$ and with $C_\textup{HR}$ being the sharp constant $C_\textup{HR}= d^2/4$ (see~\cite{TertikasZographopoulos2007}) Indeed, if $V$ satisfies~\eqref{eq:smallness} then, in particular, it satisfies
\begin{equation*}
	\int_{\R^d} r^4 |V|^2 |\psi|^2\leq a^2 \frac{4}{d^2} \int_{\R^d} |\Delta\psi|^2.
\end{equation*}
Notice that if $a$ satisfies~\eqref{eq:smallness-a}, in particular $a^24/d^2<1.$
\end{remark}

The total absence of eigenvalues of Theorem~\ref{thm:main-nsa}
is a consequence of two independent results:
Absence of eigenvalues in a cone containing the positive real semi-axis
and absence of eigenvalues in the rest of the complex plane.
As usual, the former is more demanding for it involves
embedded eigenvalues too (traditionally referred to as ``positive''
eigenvalues in the self-adjoint setting). 
On the other hand, the latter (discrete) eigenvalues
can be excluded more easily by numerical range arguments,
which is the content of the following result.

\begin{theorem}[Absence of non-positive eigenvalues]\label{thm:absence-out}
	Let $d\geq 5.$ Suppose that $V\colon \R^d\to \C$ is such that $V\in L^1_\textup{loc}(\R^d)$ and $r^2V\in L^2_\textup{loc}(\R^d).$ 
	Moreover,  given $\delta>0$ assume that 
	\begin{equation}\label{eq:V-cond}
	\forall\, \psi \in H^2(\R^d),
	\qquad
	\int_{\R^d} r^4 |V|^2 |\psi|^2\leq a_\delta^2 \int_{\R^d} |\Delta \psi|^2,
	\end{equation}
	where $a_\delta>0$ is such that $\left(1+\frac{1}{\delta} \right)\frac{a_\delta}{\sqrt{C_\textup{R}}}<1,$ with $C_\textup{R}$ being the Rellich constant which appears in~\eqref{eq:Rellich}. 
	Then
	\begin{equation*}
	\sigma_\mathrm{p}(\Delta^2+ V)\subseteq \{z\in \C\colon |\Im z|\leq \delta \Re z\}.
	\end{equation*}
\end{theorem}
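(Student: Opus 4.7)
The plan is to run a straightforward numerical range argument. Let $z \in \sigma_\mathrm{p}(H_V)$ with eigenfunction $\psi \in H^2(\R^d)$, normalized so that $\|\psi\|_{L^2}=1$. Pairing the eigenvalue equation $\Delta^2\psi + V\psi = z\psi$ against $\psi$ and integrating by parts yields the basic identity
\begin{equation*}
z = \int_{\R^d} |\Delta \psi|^2 + \int_{\R^d} V |\psi|^2,
\end{equation*}
so that $\Re z = \int|\Delta\psi|^2 + \int (\Re V)|\psi|^2$ and $\Im z = \int (\Im V)|\psi|^2$.

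The next step is to control the perturbative term $\int V|\psi|^2$ by the kinetic energy $\int|\Delta\psi|^2$. To do this I would write $V|\psi|^2 = (r^2 V \psi)(\bar\psi/r^2)$ and apply Cauchy--Schwarz:
\begin{equation*}
\left| \int_{\R^d} V |\psi|^2 \right|
\leq \left( \int_{\R^d} r^4 |V|^2 |\psi|^2 \right)^{1/2}
\left( \int_{\R^d} \frac{|\psi|^2}{r^4} \right)^{1/2}.
\end{equation*}
The first factor is bounded via the hypothesis \eqref{eq:V-cond} by $a_\delta \|\Delta\psi\|_{L^2}$, while the second is bounded via the Rellich inequality \eqref{eq:Rellich} by $C_\textup{R}^{-1/2} \|\Delta\psi\|_{L^2}$. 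This gives the key estimate
\begin{equation*}
\left| \int_{\R^d} V|\psi|^2 \right| \leq \frac{a_\delta}{\sqrt{C_\textup{R}}}\int_{\R^d} |\Delta\psi|^2.
\end{equation*}

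Since $(1+1/\delta) a_\delta/\sqrt{C_\textup{R}} < 1$ forces in particular $a_\delta/\sqrt{C_\textup{R}} < 1$, the real part is automatically positive,
\begin{equation*}
\Re z \geq \left( 1 - \frac{a_\delta}{\sqrt{C_\textup{R}}} \right) \int_{\R^d} |\Delta\psi|^2 > 0,
\end{equation*}
while the imaginary part satisfies $|\Im z| \leq (a_\delta/\sqrt{C_\textup{R}}) \int |\Delta \psi|^2$. Dividing these two bounds and using the hypothesis on $a_\delta$, which is exactly equivalent to $a_\delta/\sqrt{C_\textup{R}} \leq \delta(1 - a_\delta/\sqrt{C_\textup{R}})$, I obtain $|\Im z| \leq \delta\, \Re z$, placing $z$ inside the desired cone.

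There is no real obstacle here; the only subtle points are that \eqref{eq:V-cond} is used in a Cauchy--Schwarz pairing rather than directly (so that it meshes with Rellich to yield a clean bound by $\|\Delta\psi\|_{L^2}^2$), and that the rather peculiar smallness constant $(1+1/\delta)$ is dictated by balancing the loss $1-a_\delta/\sqrt{C_\textup{R}}$ in the denominator against the gain $\delta$ in the numerator. Finally, the integrability requirements $V \in L^1_\textup{loc}$ and $r^2 V \in L^2_\textup{loc}$ just guarantee that every integral above is well defined and that the formal manipulation of the eigenvalue equation against $\psi$ is justified for $\psi \in H^2(\R^d)$.
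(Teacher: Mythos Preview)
Your proof is correct and follows essentially the same approach as the paper's. Both arguments pair the eigenvalue equation with $\bar\psi$, separate into real and imaginary parts (the paper's identities $(\mathcal{S}_1)$ and $(\mathcal{S}_2)$), and bound $\int V|\psi|^2$ via the same Cauchy--Schwarz splitting $(r^2V\psi)(\bar\psi/r^2)$ followed by the hypothesis and the Rellich inequality; the only cosmetic difference is that the paper argues by contradiction and takes the linear combination $\delta\cdot(\mathcal{S}_1)\pm(\mathcal{S}_2)$ before estimating, whereas you bound the real and imaginary parts separately and then divide.
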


\begin{remark}

We emphasise that the critical Rellich potential $V_\alpha(x):=\frac{\alpha}{|x|^4}$ for suitably small coupling constant $\alpha\in\C$ is covered by Theorems~\ref{thm:main-nsa} and~\ref{thm:absence-out}. Indeed, one has
\begin{equation}\label{eq:Rellich-potential}
	 \int_{\R^d} |x|^4|V_\alpha|^2|\psi|^2
	= |\alpha| \int_{\R^d} \frac{|u|^2}{|x|^4}.
\end{equation} 
If $|\alpha|$ is sufficiently small, one can see that the right-hand-side of~\eqref{eq:Rellich-potential} satisfies conditions~\eqref{eq:smallness} and~\eqref{eq:V-cond}; this just follows using a weighted Hardy inequality (see~\eqref{eq:weighted-Hardy} below) and the Rellich inequality~\eqref{eq:Rellich}, respectively.  
\end{remark}

As a further application of the technique developed to prove Theorems~\ref{thm:main-nsa} and~\ref{thm:absence-out}, we establish uniform resolvent estimates for $H_V$. The following result represents a higher order analogue of the results available for Schrödinger operators (see, for instance,~\cite{CossettiKrejcirik20,BarceloVegaZubeldia2013}).

\begin{theorem}[Uniform resolvent estimates]\label{thm:resolvent-est}
	Let $d\geq 5.$ 
	Under the same hypotheses of Theorem~\ref{thm:main-nsa},
    there exists a positive constant $c$ such that, for all $z\in \C,$
	\begin{equation*}
		\|r^{-2}(H_V-z)^{-1}r^{-2}\|_{L^2(\R^d)\to L^2(\R^d)} \leq c.
	\end{equation*}
\end{theorem}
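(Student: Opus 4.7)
The plan is to prove the equivalent a priori estimate
\begin{equation*}
\|r^{-2} u\|_{L^2(\R^d)} \leq c \,\|r^{2} (H_V - z) u\|_{L^2(\R^d)}
\end{equation*}
for every $u$ in a dense subspace of the domain satisfying $r^{2}(H_V-z)u \in L^2(\R^d)$ and every $z \in \C$, with $c$ independent of $z$. Once this is established, a standard closure argument converts it into the bound on $r^{-2}(H_V-z)^{-1}r^{-2}$: since Theorem~\ref{thm:main-nsa} guarantees $\sigma_\mathrm{p}(H_V)=\varnothing$, the resolvent is a well-defined closed operator away from $\sigma(H_V)$, and setting $f:=(H_V-z)u$, $g:=r^2 f$ rewrites the estimate as $\|r^{-2}(H_V-z)^{-1}r^{-2}g\|_{L^2}\leq c\|g\|_{L^2}$.

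First I would recycle the multiplier identity built to prove Theorem~\ref{thm:main-nsa}, retaining now the non-zero source term on the right-hand side. Pairing $(H_V - z)u = f$ with a well-chosen multiplier $[M]u$ (a combination of the radial derivative $\partial_r u$, the weighted term $r^{-1}u$ and iterates thereof, shaped so that integration by parts against $\Delta^2 u$ produces non-negative quadratic forms and so that $\Re \langle -zu,[M]u\rangle$ carries the right sign) and taking real and imaginary parts yields an inequality of the schematic form
\begin{equation*}
\mathcal{Q}(u)\;\leq\;\int_{\R^d}|V||u||[M]u| \;+\; |\langle f,[M]u\rangle|,
\end{equation*}
where $\mathcal{Q}(u)$ is a positive quadratic form which, via the Rellich inequality~\eqref{eq:Rellich} and the Hardy-Rellich inequality~\eqref{eq:HR}, controls $\int r^{-4}|u|^2$ and $\int r^{-2}|\nabla u|^2$. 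The smallness assumption~\eqref{eq:smallness}--\eqref{eq:smallness-a} is precisely what allows the $V$-integral to be absorbed into $\mathcal{Q}(u)$, exactly as in the proof of Theorem~\ref{thm:main-nsa} with $f\equiv 0$.

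The surviving source term is handled by Cauchy-Schwarz, $|\langle f,[M]u\rangle|\leq \|r^2 f\|_{L^2}\|r^{-2}[M]u\|_{L^2}$, and the factor $\|r^{-2}[M]u\|_{L^2}$ is in turn controlled by $\mathcal{Q}(u)^{1/2}$ via weighted Hardy-type inequalities. A Young-inequality absorption then gives $\mathcal{Q}(u)\leq c'\|r^2 f\|_{L^2}^2$, and since $\mathcal{Q}(u)\geq \tilde c\|r^{-2}u\|_{L^2}^2$ the desired bound follows.

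The main obstacle is making sure all constants are genuinely independent of $z\in\C$. The contribution of $-zu$ produces $\Re\langle -zu,[M]u\rangle + \mathrm{i}\Im\langle -zu,[M]u\rangle$; uniformity in $z$ forces the multiplier $M$ to be chosen so that the real part is sign-definite, regardless of $\Re z$ and $|z|$. In the sector $\{|\Im z|\leq \delta\Re z\}$, which contains the dangerous positive semi-axis $[0,\infty)$ where embedded eigenvalues could occur, this is the delicate case and the full multiplier of Theorem~\ref{thm:main-nsa} is needed; outside the sector, the numerical range bound underlying Theorem~\ref{thm:absence-out} suffices and, once the source term is tracked through the argument, yields the resolvent estimate directly from the Rellich inequality~\eqref{eq:Rellich}.
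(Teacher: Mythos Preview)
Your outline misses the decisive technical ingredient the paper uses when $z$ lies near the positive semi-axis. You propose a single multiplier $[M]$ applied directly to the biharmonic equation, chosen so that $\Re\langle -zu,[M]u\rangle$ has the right sign uniformly in $z$. The paper explicitly states (Section~\ref{Sec.sketch}) that ``differently from the situation of second order operators, in the case of biharmonic operators a direct approach seems not suitably applicable,'' and Remark~\ref{rmk:gauge} explains why: near the essential spectrum one cannot expect an a~priori bound on $u$ itself but only on the \emph{gauged} function $u^-(x)=e^{-i(\Re\sqrt{z})^{1/2}\sgn(\Im\sqrt{z})|x|}u(x)$. Your multiplier scheme never introduces this gauge, and without it the $z$-dependent terms do not acquire a definite sign as $z$ approaches $[0,\infty)$.

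What the paper actually does in $\mathcal{S}_\textup{pos}=\{\Re z\geq 0\}$ is to \emph{factor} $\Delta^2-z=(\Delta-\sqrt{z})(\Delta+\sqrt{z})$, set $v:=(\Delta+\sqrt{z})u$, and reduce to two Schr\"odinger-type estimates: Lemma~\ref{lemma:FKV} (the gauged multiplier estimate of Fanelli--Krej\v{c}i\v{r}\'ik--Vega) controls $\|\nabla(\partial_j u)^-\|_2$ in terms of $\||x|\partial_j v\|_2$, and Lemma~\ref{lemma:schroe} (a weighted estimate for $\Delta-\sqrt{z}$, whose spectral parameter lies safely away from the spectrum) controls $\||x|\nabla v\|_2$ by $\||x|^2 f\|_2$. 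Chaining these gives Proposition~\ref{prop:apriori}, from which Theorem~\ref{thm:apriori} and then Theorem~\ref{thm:resolvent-est} follow by inserting $f=-Vu+f$, absorbing the $V$-contribution via~\eqref{eq:smallness}--\eqref{eq:smallness-a}, and finally passing from $\|\nabla(\partial_j u)^-\|_2$ back to $\|r^{-2}u\|_2$ through Hardy and weighted-Hardy inequalities. Your treatment of the complementary region $\mathcal{S}_\textup{neg}$ is essentially correct and matches the paper, but the hard half of the argument is absent.
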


Actually, Theorem~\ref{thm:resolvent-est} is a direct consequence of a stronger result, which shows that \emph{a priori} estimates for solutions to the resolvent equation hold. 
\begin{theorem}[\emph{A priori} estimates]\label{thm:apriori}
	Let $d\geq 5.$ 
	Under the same hypotheses of Theorem~\ref{thm:main-nsa},
	there exists a positive constant $c$ such that, given any $z\in \C$ and $r^2f\in L^2(\R^d),$ any solution $u\in H^2(\R^d)$ of the equation $(H_V-z)u=f$ satisfies
	\begin{itemize}
		\item for $\Re z\geq 0,$ 
		\begin{equation}\label{eq:lr}
			\sum_{j=1}^d\|\nabla(\partial_j u)^-\|_{L^2(\R^d)}\leq c \|r^2 f\|_{L^2(\R^d)},
		\end{equation}
		\item for $\Re z<0,$
		\begin{equation}\label{eq:ud}
			\|\Delta u\|_{L^2(\R^d)}\leq c_{a,d}\|r^2 f\|_{L^2(\R^2)},
		\end{equation}
		where
		\begin{equation}\label{eq:cad}
			c_{a,d}:=\frac{\sqrt{C_\textup{HR}}}{\sqrt{C_\textup{R}}\sqrt{C_\textup{HR}}-a},
		\end{equation}
		with $C_\textup{R}$ and $C_\textup{HR}$ being the Hardy and Hardy-Rellich constants respectively (\emph{cfr.}~\eqref{eq:Rellich} and~\eqref{eq:HR}) and with $a$ as in~\eqref{eq:smallness-a}. (One can check that if $a$ satisfies~\eqref{eq:smallness-a} then $c_{a,d}$ is strictly positive and bounded).
	\end{itemize}
	Here, for any suitable function $v,$ we denoted with $v^-$ the auxiliary function defined as follows
	\begin{equation}\label{eq:auxiliary}
		v^-(x):=e^{-i(\Re \sqrt{z})^{1/2}\sgn(\Im \sqrt{z})|x|} v(x).
	\end{equation}
\end{theorem}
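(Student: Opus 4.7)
The plan is to handle the two regimes $\Re z\ge 0$ and $\Re z<0$ by essentially independent arguments, the latter by a soft coercivity estimate and the former by a Morawetz-type multiplier scheme adapted to the fourth-order setting.

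In the easier regime $\Re z<0$, I would pair the resolvent equation $(\Delta^2+V-z)u=f$ with the test function $\overline u$ and take real parts to obtain
\begin{equation*}
\|\Delta u\|_{L^2}^2 - \Re z\,\|u\|_{L^2}^2 + \Re\!\int_{\R^d} V|u|^2 = \Re\!\int_{\R^d} f\overline u .
\end{equation*}
The $-\Re z$ contribution is nonnegative and is discarded. The $V$-term is bounded by the Cauchy--Schwarz splitting $|V||u|^2=(r^2|V||u|)(|u|/r^2)$, which together with the Rellich inequality~\eqref{eq:Rellich}, the Hardy--Rellich inequality~\eqref{eq:HR}, and the hypothesis~\eqref{eq:smallness} yields
\begin{equation*}
\left|\int_{\R^d} V|u|^2\right|\le \frac{a}{\sqrt{C_\textup{R}C_\textup{HR}}}\,\|\Delta u\|_{L^2}^2 .
\end{equation*}
The right-hand side is estimated analogously by $\|r^2 f\|_{L^2}\,\|u/r^2\|_{L^2}\le (1/\sqrt{C_\textup{R}})\,\|r^2 f\|_{L^2}\,\|\Delta u\|_{L^2}$. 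Since~\eqref{eq:smallness-a} in particular forces $a<\sqrt{C_\textup{R}C_\textup{HR}}$, the $V$-term can be absorbed on the left, and dividing by $\|\Delta u\|_{L^2}$ produces~\eqref{eq:ud} with precisely the constant $c_{a,d}$ of~\eqref{eq:cad}.

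For $\Re z\ge 0$ the previous energy identity is no longer coercive and the argument must rely on a Morawetz-type multiplier scheme, in the spirit of~\cite{FanelliKrejcirikVega18-JST,CossettiFanelliKrejcirik20}, here extended to the bilaplacian setting via the formal factorisation $\Delta^2-z=(\Delta+\sqrt z)(\Delta-\sqrt z)$. Since $\Re z\ge 0$ implies $\Re\sqrt z\ge 0$, the quantity $\mu:=(\Re\sqrt z)^{1/2}\sgn(\Im\sqrt z)\in\R$ is well defined and the twisted conjugation $v\mapsto v^-=e^{-i\mu|x|}v$ is tailored to remove the oscillation of the radiating solutions of the inner Schrödinger-type factor. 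The plan is then to pair the equation with a phase-twisted combination of a scalar multiplier of the type $-\Delta\overline{u^-}$ and a radial vector-field multiplier of Morawetz type $F(r)\,x\cdot\nabla\overline{u^-}$, with a radial profile $F$ chosen so that, after careful integration by parts keeping track of the commutators with $e^{-i\mu|x|}$, the real part of the resulting identity is bounded below exactly by $\sum_{j=1}^d\|\nabla(\partial_j u)^-\|_{L^2}^2$, modulo potential and source contributions.

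The principal difficulty is the algebraic bookkeeping of this twisted Morawetz identity at the fourth-order level: the profile $F$ must be tuned so that the bulk, the boundary, and the phase-derivative contributions combine into a single manifestly nonnegative quadratic form of the desired shape, with constants matching exactly the three summands in the threshold~\eqref{eq:smallness-a}. Once such an identity is in hand, the perturbation term is absorbed by Cauchy--Schwarz together with hypothesis~\eqref{eq:smallness} and the weighted Hardy inequality~\eqref{eq:weighted-Hardy}, which allows $\int|\nabla u|^2/r^2$ to be controlled by $\sum_j\|\nabla(\partial_j u)^-\|_{L^2}^2$; the three distinct absorption steps account for the three summands of~\eqref{eq:smallness-a}, and the strict inequality therein is exactly what is required to close the bootstrap. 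The remaining inhomogeneous contribution is finally controlled by $\|r^2 f\|_{L^2}$ through a further weighted Cauchy--Schwarz, yielding~\eqref{eq:lr}.
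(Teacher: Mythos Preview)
Your treatment of the regime $\Re z<0$ is correct and coincides with the paper's argument.

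For $\Re z\ge 0$, however, your plan has a real gap. You invoke the factorisation $\Delta^2-z=(\Delta+\sqrt z)(\Delta-\sqrt z)$ only as motivation, and then propose to build a single fourth-order twisted Morawetz identity by pairing the original equation against multipliers like $-\Delta\overline{u^-}$ and $F(r)\,x\cdot\nabla\overline{u^-}$. The paper explicitly remarks that such a direct approach at the biharmonic level does not seem to close; the mechanism it actually uses is to exploit the factorisation \emph{operationally}, not just heuristically. Concretely, one introduces the auxiliary function $v:=(\Delta+\sqrt z)u$, so that $(\Delta-\sqrt z)v=f-Vu$ and $(\Delta+\sqrt z)\partial_j u=\partial_j v$. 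Since $\Re\sqrt z\ge|\Im\sqrt z|$, the first equation has spectral parameter with \emph{nonpositive} real part, and an elementary weighted energy estimate (testing against $|x|^2\overline v$ and using the weighted Hardy inequality with $\gamma=1$) gives $\||x|\,|\nabla v|\|_{L^2}\le \tfrac{2}{d-4}\||x|^2(f-Vu)\|_{L^2}$. The second equation is a genuine Helmholtz-type problem, and here one imports the already-established second-order Morawetz estimate from~\cite{FanelliKrejcirikVega18-JST}, which controls $\|\nabla(\partial_j u)^-\|_{L^2}$ in terms of $\||x|\partial_j v\|_{L^2}$ with a linear and a $3/2$-power term. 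Chaining the two bounds and then absorbing $\||x|^2 Vu\|_{L^2}\le (a/\sqrt{C_{\mathrm H}})\sum_l\|\nabla(\partial_l u)^-\|_{L^2}$ on the left yields~\eqref{eq:lr}. None of this structure---the intermediate quantity $v$, the asymmetry between the two factors, or the reuse of the second-order result---appears in your outline, and without it the ``algebraic bookkeeping'' you allude to does not obviously terminate.

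A small correction: the smallness condition~\eqref{eq:smallness-a} has two summands, not three; they arise precisely from the linear and $3/2$-power terms in the second-order Morawetz estimate after substitution, not from three separate absorption steps.
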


\begin{remark}\label{rmk:gauge}
As one can notice in the statement of Theorem~\ref{thm:apriori}, when the spectral parameter $z$ lies in a region containing the essential spectrum (here the non-negative real line), one is able to prove an \emph{a priori} estimate \emph{only} for a suitable \emph{change of gauge} of $u$ and not for the solution $u$ itself (compare~\eqref{eq:lr} with~\eqref{eq:ud}). 
This fact is not just a technicality strictly related to the biharmonic operator. As a matter of fact, the need of a change of gauge in a region close to the essential spectrum was already observed for Schrödinger operators in different contexts (see~\cite[Thm.2.1]{BPST2004},~\cite[Thm.1.6]{BarceloVegaZubeldia2013} and~\cite[Thm.8]{FanelliKrejcirikVega18-JST}) and can be easily explained looking at the toy model given by Helmholtz operators,
namely $\Delta +\kappa,$ with $\kappa\in \R_{\geq 0},$ where with $\R_{\geq 0}$ we will be denoting the set of non-negative real numbers:

\begin{quote}
Let us consider for simplicity the physical dimension $d=3.$ In this case the Green function of this operator is given by 
	$\mathcal{G}_\kappa^{\pm}(x)
	=\tfrac{1}{4\pi} \tfrac{e^{\pm i\sqrt{\kappa}|x|}}{|x|}.
	$
Computing $\nabla \mathcal{G}_\kappa^{+}$ one has
	$|\nabla \mathcal{G}_\kappa^{+}(x)|\sim |x|^{-2} + |x|^{-1},$
where the first term comes from the differentiation of $ |x|^{-1}$ and the second term comes from the differentiation of $e^{i\sqrt{\kappa}|x|}.$ If, on one hand, for any sphere $B$ in $\R^3$ the inverse square $|x|^{-2}$ belongs to $L^2(\R^3\setminus B),$ on the other hand, $|x|^{-1}$ does not display a sufficiently fast decay to ensure the $L^2$-integrability at infinity. Thus $\nabla \mathcal{G}_\kappa^{+}\notin L^2(\R^3\setminus B).$ Nevertheless if one introduces the auxiliary function 
$e^{-i\sqrt{\kappa}|x|} \mathcal{G}_\kappa^{+}$ 
one has $\nabla (e^{-i\sqrt{\kappa}|x|} \mathcal{G}_\kappa^{+})\sim |x|^{-2}\in L^2(\R^3\setminus B).$ Taking this into account, the only reasonable $L^2$-bound expected is the one for the gradient of a change of gauge of the solution and \emph{not} of the solution itself. 
This justify the reasonableness of~\eqref{eq:lr}.
\end{quote}

Another related motivation for the change of gauge has to be found in the Sommerfeld radiation conditions introduced for guaranteeing uniqueness of solution to the Helmholtz equation (see~\cite{Sommerfeld1912} or~\cite{Schot1992} for a review reference). It is well known that the mathematical description of the propagation of given acoustic, elastic or electromagnetic waves after encountering an obstacle is given through an exterior boundary value problem for the Helmholtz equation $\Delta u + \kappa u=0,$ $\kappa \in \R_{\geq 0}.$ The main difficulty that arises working with this model is the non uniqueness of the solution:
besides the expected outgoing waves which result when the incident wave is scattered by the object, the mathematical solution of the problem also provides incoming waves which originate at infinity and move towards the object. These incoming waves are physically meaningless and must be rejected by some criterion built into the mathematical formulation of the problem. Sommerfeld, in his pioneering work~\cite{Sommerfeld1912}, was the first who stated a mathematically precise and easily applicable condition for guaranteeing the uniqueness of solution $u$ to the Helmholtz equation, namely
\begin{equation}\label{eq:Sommerfeld}
	\lim_{|x|\to \infty} |x|^{\frac{d-1}{2}} \left|\left(\frac{\partial}{\partial |x|} - i \sqrt{\kappa} \right) u(x)\right|=0.
\end{equation}
Notice that, for instance, between $u_{\pm}(x)=\frac{e^{\pm i\sqrt{\kappa}|x|}}{4\pi |x|},$ which are both solutions to 
the three-dimensional Helmholtz equation with a point source at $0,$ only $u_+$ satisfies condition~\eqref{eq:Sommerfeld}. 
Later Rellich~\cite{Rellich1943} showed that condition~\eqref{eq:Sommerfeld} could be weakened to the integral form
\begin{equation}\label{eq:integralSommerfeld}
	\lim_{r\to \infty} \int_{\partial B(0,r)} |\nabla (e^{-i\sqrt{\kappa}|x|} u)|^2\,d\sigma(x)=0.
\end{equation}
Thus, again, estimate~\eqref{eq:lr} appears natural in virtue of condition~\eqref{eq:integralSommerfeld}. As a matter of fact, analogous estimates were needed also for proving results in other context, see for instance~\cite{Eidus62,IkebeSaito1972}, where a priori estimates for a suitable change of gauge were used to establish limiting absorption principle for suitable electromagnetic Helmholtz operators.
\end{remark}

\subsection{The main ideas}\label{Sec.sketch}
%
Here, we want to briefly comment on the strategy of the proof of Theorems~\ref{thm:main-nsa} and~\ref{thm:apriori}. 
As customarily in the proofs of such results, one treats differently the case of spectral parameter $z$ in the region containing the essential spectrum $[0, \infty),$ namely $\mathcal{S}_{\textup{pos}}:=\{ z\in \C\colon \Re z\geq 0\},$ or in the region $\mathcal{S}_\textup{neg}:=\{z\in \C\colon \Re z<0\}$ (see Fig.~\ref{fig:sectors-z}). 
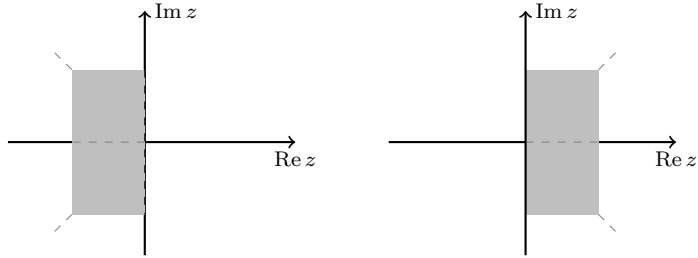
\begin{figure}[ht]\label{fig:sectors-z}
\centering
\begin{tikzpicture}[scale=.3]
\draw[thick,->](-6,0)--(6.6,0) node[below]{\footnotesize $\Re z$};
\draw[thick,->](0,-5)--(0,5.8) node[right]{\footnotesize $\Im z$};
\fill[gray!50] (0,3.2)--(-3.2,3.2)--(-3.2,-3.2)--(0,-3.2)--cycle;
\draw[gray, dashed] (0,0)--(-3.2,0);
\draw [gray, dashed] (-3.2,3.2)--(-4,4);
\draw [gray, dashed] (-3.2,-3.2)--(-4,-4);
\draw [dashed] (0,3.2)--(0,-3.2);
\end{tikzpicture}
\qquad
\begin{tikzpicture}[scale=.3]
\draw[thick,->](-6,0)--(6.6,0) node[below]{\footnotesize $\Re z$};
\fill[gray!50] (0,3.2)--(3.2,3.2)--(3.2,-3.2)--(0,-3.2)--cycle;
\draw[gray, dashed] (0,0)--(3.2,0);
\draw [gray, dashed] (3.2,3.2)--(4,4);
\draw [gray, dashed] (3.2,-3.2)--(4,-4);
\draw[thick,->](0,-5)--(0,5.8) node[right]{\footnotesize $\Im z$};
\end{tikzpicture}
\caption{The region $\mathcal{S}_\textup{neg}$ (left) and  $\mathcal{S}_\textup{pos}$ (right) in the $\Re z/\Im z$ plane} 
\end{figure}

Showing that there is no point spectrum in $\mathcal{S}_\textup{neg},$ or, more generally, proving resolvent estimates in this region, is easy and the proof is similar to the one of Theorem~\ref{thm:absence-out} and requires just a direct application of the method of multipliers suitably adapted to the biharmonic operator. 

On the other hand,
the proof in the region $\mathcal{S}_\textup{pos}$ is instead highly non-trivial: differently from the situation of second order operators, in the case of biharmonic operators a direct approach seems not suitably applicable. It seems instead successful using an  ``induction" argument that reduces the analysis to the study of two Schrödinger operators. This induction argument relies on the following natural decomposition
\begin{equation}\label{eq:decomposition-op}
	\Delta^2-z=(\Delta-\sqrt{z})(\Delta + \sqrt{z}),
\end{equation}
where $\sqrt{z}$ represents the principal square root of $z\in \C.$ The main advantage of this decomposition over the bilaplacian formulation is that it allows, even if not in a straightforward way, to use the existing extensive bibliography on Schrödinger operators. 

One should also notice that another advantage of the decomposition~\eqref{eq:decomposition-op} is that the spectral parameters in the two Schrödinger operators appear with different sign. This results in the fact that only one of the two will be troublesome. More precisely, if dealing with $\Delta + \sqrt{z}$ requires a more involved analysis 
as $\Delta + \sqrt{z}$ is not necessarily invertible in $\mathcal{S}_\textup{pos},$ indeed  $\sqrt{z}$ lies in the region shown in Figure~\ref{fig:sectors} (right) (since $\sqrt{z}$ is the principal square root, then $\Re \sqrt{z}\geq 0$), on the other hand, the operator $\Delta-\sqrt{z}$ is invertible (the spectral parameter appears with the opposite sign compared to the previous case), thus absence of eigenvalues or, more in general, resolvent estimates are easier to get.
\begin{figure}[ht]\label{fig:sectors}
\centering
\begin{tikzpicture}[scale=.3]
	\draw[thick,->](-6,0)--(6.6,0) node[below]{\footnotesize $\Re \sqrt{z}$};
\fill[gray!50] (0,0)--(0,3.2)--(3.2,3.2)--(0,0)--cycle;
\draw (0,0)--(3,3)--cycle;
\draw (0,0)--(3,-3)--cycle;
\draw [dashed] (3,3)--(4,4);
\draw [dashed] (3,-3)--(4,-4);
\fill[gray!50] (0,0)--(3.2,-3.2)--(0,-3.2)--(0,0)--cycle;
\draw[thick,->](0,-5)--(0,5.8) node[right]{\footnotesize $\Im \sqrt{z}$};
\end{tikzpicture}
\qquad
\begin{tikzpicture}[scale=.3]
	\draw[thick,->](-6,0)--(6.6,0) node[below]{\footnotesize $\Re \sqrt{z}$};
\draw[thick,->](0,-5)--(0,5.8) node[right]{\footnotesize $\Im \sqrt{z}$};
\fill[gray!50] (0,0)--(3.2,3.2)--(3.2,-3.2)--(0,0)--cycle;
\draw[gray, dashed] (0,0)--(3.2,0);
\draw [dashed] (0,0)--(4,4);
\draw [dashed] (0,0)--(4,-4);
\end{tikzpicture}
\caption{The region $\mathcal{S}_{\textup{neg}}:=\{0\leq \Re \sqrt{z} < |\Im \sqrt{z}|\}$ (left) and  $\mathcal{S}_\textup{pos}:=\{\Re \sqrt{z}\geq |\Im\sqrt{z}|\}$ (right) in the $\Re \sqrt{z}/\Im \sqrt{z}$ plane} 
\end{figure}
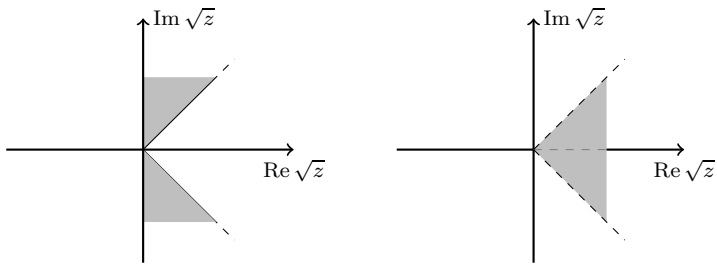

\subsection{More on self-adjoint perturbations}
In the self-adjoint case we have the following alternative results. The first one is concerned with detecting some natural \emph{repulsivity} condition for the potential guaranteeing absence of point spectrum of the perturbed self-adjoint biharmonic operator.
In passing, we should mention that self-adjoint biharmonic operators have attracted the interest of the mathematics community also in other contexts, see for instance~\cite{Feng-Soffer-Yao_2018,GreenToprak2019,ErdoganGreenToprak2021,Schlag2021,LiSofferYao2023,FengSofferWuYao2020,ErdoganGoldbergGreen2023,ErdoganGreen2022,ErdoganGreen2023,ErdoganGoldbergGreen2023-preprint} for several recent results on local and global dispersive estimates and absence of positive resonances. 
\begin{theorem}[Absence of eigenvalues: self-adjoint]
\label{thm:main-sa}
Let $d\geq 5.$ Suppose that $V\colon \R^d \to \R$ is such that $V\in L^1_\textup{loc}(\R^d)$ and $[x\cdot \nabla V]_+\in L^1_\textup{loc}(\R^d).$ Moreover, assume that there exists $a\in [0,1)$ such that
\begin{equation}\label{eq:sa}
	\forall\, \psi \in H^2(\R^d),
		\qquad \frac{1}{4}\int_{\R^d} [x\cdot \nabla V]_+ |\psi|^2\leq a \int_{\R^d} |\Delta u|^2. 
\end{equation}
If also $V\in W^{1,d/4}_\textup{loc},$ then $\sigma_\mathrm{p}(H_V)=\varnothing.$
\end{theorem}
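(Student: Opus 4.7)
The strategy is the classical \emph{multiplier/virial} argument adapted to the bilaplacian, which in the self-adjoint setting produces a Pohozaev-type identity that is in direct conflict with the repulsivity assumption~\eqref{eq:sa}.

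\textbf{Step 1: Reduction to real eigenfunctions.} Since $V\in L^1_\textup{loc}(\R^d)$ is real-valued and~\eqref{eq:sa} gives a quadratic form bound ensuring $V$ is a relatively form-small perturbation of $\Delta^2$, the operator $H_V$ is self-adjoint on the domain $\Dom(H_V)=\Dom(\Delta^2)=H^4(\R^d)$ (by a standard Kato--Rellich/KLMN argument, together with the Rellich inequality~\eqref{eq:Rellich}). In particular $\sigma_\mathrm{p}(H_V)\subset\R$, and if $H_V u=\lambda u$ with $u\neq 0$, then both $\Re u$ and $\Im u$ solve the same eigenvalue equation, so we may assume $u$ real. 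The assumption $V\in W^{1,d/4}_\textup{loc}$ enables elliptic regularity applied to $\Delta^2 u=(\lambda-V)u$ and yields enough local smoothness of $u$ to perform two integrations by parts classically (and, via a standard truncation argument, $u$ decays sufficiently at infinity to render boundary contributions negligible).

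\textbf{Step 2: The Pohozaev-type virial identity.} I test the eigenvalue equation against the multiplier
\begin{equation*}
Mu := x\cdot\nabla u + \tfrac{d}{2}\,u.
\end{equation*}
Three separate integration-by-parts computations yield, for $u$ real:
\begin{align*}
\int_{\R^d} u\cdot Mu &= 0,\\
\int_{\R^d} (Vu)\cdot Mu &= -\tfrac{1}{2}\int_{\R^d}(x\cdot\nabla V)\,u^2,\\
\int_{\R^d} (\Delta^2 u)\cdot Mu &= 2\int_{\R^d}|\Delta u|^2,
\end{align*}
where the last identity uses the commutator $[\Delta,\,x\cdot\nabla]=2\Delta$, so that $\Delta(x\cdot\nabla u)=2\Delta u+x\cdot\nabla\Delta u$, plus the trivial $\int v\,(x\cdot\nabla v)=-\frac{d}{2}\int v^2$ with $v=\Delta u$. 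Summing, the $\lambda$-contribution drops out and one obtains the virial identity
\begin{equation*}
\int_{\R^d}|\Delta u|^2 \;=\; \frac{1}{4}\int_{\R^d}(x\cdot\nabla V)\,u^2.
\end{equation*}

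\textbf{Step 3: Conflict with the repulsivity assumption.} Using $x\cdot\nabla V\leq [x\cdot\nabla V]_+$ and hypothesis~\eqref{eq:sa},
\begin{equation*}
\int_{\R^d}|\Delta u|^2 \;\leq\; \frac{1}{4}\int_{\R^d}[x\cdot\nabla V]_+\,u^2 \;\leq\; a\int_{\R^d}|\Delta u|^2,
\end{equation*}
which, since $a<1$, forces $\Delta u \equiv 0$. As $u\in H^2(\R^d)\subset L^2(\R^d)$ is harmonic, Liouville's theorem gives $u\equiv 0$, contradicting the assumption that $u$ is an eigenfunction. Hence $\sigma_\mathrm{p}(H_V)=\varnothing$.

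\textbf{Main obstacle.} The genuinely delicate point is the rigorous justification of Step~2. The multiplier $Mu=x\cdot\nabla u+\frac{d}{2}u$ is unbounded (the weight $|x|$ is not controlled by the $H^2$-norm), so the integrals above are only formal a priori. The plan is to introduce a smooth radial cutoff $\chi_R$ with $\chi_R\equiv 1$ on $B(0,R)$ and $\supp\chi_R\subset B(0,2R)$, work with the truncated multiplier $\chi_R Mu$, carry out all integrations by parts in the classical sense (legitimate thanks to local $H^4$-regularity of $u$ from $V\in W^{1,d/4}_\textup{loc}$), and then pass to the limit $R\to\infty$. The commutator terms involving $\nabla\chi_R$ are handled via the Rellich/Hardy-Rellich inequalities~\eqref{eq:Rellich}--\eqref{eq:HR}, while the integrability of $(x\cdot\nabla V)u^2$ is granted globally precisely by~\eqref{eq:sa}. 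Once this approximation is controlled, Steps~2 and~3 close the argument.
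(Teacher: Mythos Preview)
Your strategy is essentially the paper's: derive the virial identity $\int|\Delta u|^2 = \frac{1}{4}\int(x\cdot\nabla V)|u|^2$ by testing against the dilation multiplier $x\cdot\nabla+\frac{d}{2}$, then use~\eqref{eq:sa} to force $\Delta u=0$ and hence $u=0$. The paper packages this as Lemma~\ref{lemma:A} (identity~\eqref{eq:A1}) plus Lemma~\ref{lemma:harmonic-zero}, and your Step~3 is exactly the paper's closing argument.

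Two points, however, deserve correction. First, your Step~1 contains an error: hypothesis~\eqref{eq:sa} bounds $[x\cdot\nabla V]_+$, not $V$ itself, so it does \emph{not} furnish a relative form bound on $V$, and you cannot invoke Kato--Rellich/KLMN from it. The paper sidesteps domain questions entirely by working with weak solutions $u\in H^2(\R^d)$; no $H^4$ regularity is claimed or needed. Second, the rigorous justification of Step~2 differs in a way that matters. You propose to truncate the multiplier and rely on local $H^4$ regularity of $u$ (via elliptic bootstrap from $V\in W^{1,d/4}_{\textup{loc}}$) so that $\chi_R\,x\cdot\nabla u\in H^2$ and classical integration by parts applies; this bootstrap is plausible but you have not carried it out, and it is not immediate that $W^{1,d/4}_{\textup{loc}}$ alone gets you to $H^4_{\textup{loc}}$. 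The paper instead truncates the \emph{solution}, $u_R=\mu_R u$, and replaces $x\cdot\nabla$ by the symmetrised difference quotient $x_k(\partial_k^\delta+\partial_k^{-\delta})$, so that the test function sits in $H^2$ without any extra regularity on $u$. The hypothesis $V\in W^{1,d/4}_{\textup{loc}}$ then enters exactly to pass to the limit $\delta\to 0$ in the potential term (convergence of $\partial_k^\delta(x_kV)$ in $L^{d/4}_{\textup{loc}}$, paired with $u\in L^{2d/(d-4)}$ by Sobolev). Finally, the paper avoids Liouville via a direct argument (Lemma~\ref{lemma:harmonic-zero}), though your appeal to Liouville for an $L^2$ harmonic function is of course valid.
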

In the next result we show related uniform resolvent estimate
in the self-adjoint setting. 
\begin{theorem}[Uniform resolvent estimates: self-adjoint]\label{thm:resolvent-est-sa}
	Let $d\geq 5.$ 
	Under the hypotheses of Theorem~\ref{thm:main-sa},
    for all $z\in \R,$
	\begin{equation*}
		\|r^{-2}(H_V-z)^{-1}r^{-2}\|_{L^2(\R^d)\to L^2(\R^d)} \leq c(d),
	\end{equation*}
	where $c(d)= \frac{1}{\sqrt{C_\textup{R}}}\frac{1}{1-a}\frac{2(d-2)}{d(d-4)},$ with $C_\textup{R}$ as in~\eqref{eq:Rellich}.
\end{theorem}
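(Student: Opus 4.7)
The plan is to establish the bound in its stronger \emph{a priori} form: for every $u\in H^2(\R^d)$ with $f:=(H_V-z)u$ satisfying $r^2 f\in L^2(\R^d)$, I will prove
\[
\|r^{-2}u\|_{L^2(\R^d)} \le c(d)\,\|r^2 f\|_{L^2(\R^d)},
\]
with the explicit constant $c(d)$ of the statement; the resolvent bound then follows whenever $(H_V-z)^{-1}$ exists. The cornerstone is a Morawetz-type commutator identity built on the dilation multiplier
\[
A:=x\cdot\nabla+\tfrac{d}{2},
\]
which is formally anti-self-adjoint on $L^2(\R^d)$. Consequently, for any formally self-adjoint operator $T$ one has the standard identity $2\Re\langle Tu,Au\rangle_{L^2}=\langle u,[T,A]u\rangle_{L^2}$.

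Specialising to $T=H_V-z$, I would compute $[\Delta^2,A]=4\Delta^2$ and $[V,A]=-(x\cdot\nabla V)$ (as multiplication operator), noting that $[zI,A]=0$ and that the contribution of $z$ to $2\Re\langle zu,Au\rangle$ vanishes because $z$ is real and $\langle u,Au\rangle$ is purely imaginary. This yields the Morawetz identity
\[
4\|\Delta u\|_{L^2}^{2}-\int_{\R^d}(x\cdot\nabla V)|u|^{2}=2\Re\langle f,Au\rangle.
\]
Bounding $\int(x\cdot\nabla V)|u|^2\le\int[x\cdot\nabla V]_+|u|^2$ and inserting the hypothesis~\eqref{eq:sa} of Theorem~\ref{thm:main-sa} together with Cauchy--Schwarz on the right-hand side produces
\[
4(1-a)\|\Delta u\|_{L^2}^{2}\le 2\|r^{2}f\|_{L^2}\,\|r^{-2}Au\|_{L^2}.
\]

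The remaining step is to dominate $\|r^{-2}Au\|_{L^2}$ by $\|\Delta u\|_{L^2}$. I would split $r^{-2}Au=r^{-1}(\hat x\cdot\nabla u)+\tfrac{d}{2}r^{-2}u$, use $|\hat x\cdot\nabla u|\le|\nabla u|$ pointwise, and apply the Hardy--Rellich inequality~\eqref{eq:HR} with sharp constant $C_{\textup{HR}}=d^2/4$ to the first term and the Rellich inequality~\eqref{eq:Rellich} with sharp constant $C_{\textup{R}}=d^2(d-4)^2/16$ to the second. These produce $\tfrac{2}{d}\|\Delta u\|$ and $\tfrac{2}{d-4}\|\Delta u\|$ respectively, whose sum collapses to
\[
\|r^{-2}Au\|_{L^2}\le\frac{4(d-2)}{d(d-4)}\|\Delta u\|_{L^2}.
\]
Substituting back and dividing by $\|\Delta u\|_{L^2}$ yields $(1-a)\|\Delta u\|_{L^2}\le \tfrac{2(d-2)}{d(d-4)}\|r^2 f\|_{L^2}$, and a final use of Rellich's inequality gives the claimed bound with $c(d)=\tfrac{1}{\sqrt{C_{\textup{R}}}}\tfrac{1}{1-a}\tfrac{2(d-2)}{d(d-4)}$.

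The main obstacle is the rigorous justification of the Morawetz identity and of the commutator computations for a generic $u\in D(H_V)$, since $Au$ contains $x\cdot\nabla u$ whose $L^2$ behaviour at infinity is not \emph{a priori} guaranteed. I would address this by introducing a smooth radial cut-off $\chi_R(x)=\chi(x/R)$, performing all manipulations with $\chi_R u$ in place of $u$, and using the Hardy--Rellich inequality to absorb the error terms as $R\to\infty$. The local regularity assumption $V\in W^{1,d/4}_{\textup{loc}}$, combined with Sobolev embedding into $L^{d/(d-4)}_{\textup{loc}}$ for $|u|^2$, ensures that $\int(x\cdot\nabla V)|u|^2$ is finite, so that every step is legitimate in the limit.
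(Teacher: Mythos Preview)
Your proposal is correct and follows essentially the same route as the paper: derive the virial/Morawetz identity $4\|\Delta u\|_2^2-\int(x\cdot\nabla V)|u|^2=\Re\int f\,(2x\cdot\nabla+d)\bar u$ (the paper's Lemma~\ref{lemma:A}, justified rigorously by cut-off plus difference quotients), estimate the right-hand side via Cauchy--Schwarz together with the Hardy--Rellich and Rellich inequalities to obtain the a~priori bound $(1-a)\|\Delta u\|_2\le\frac{2(d-2)}{d(d-4)}\|r^2 f\|_2$ (this is the paper's Theorem~\ref{thm:apriori-sa}), and then conclude by one more application of Rellich. The only cosmetic differences are your normalisation $A=x\cdot\nabla+\tfrac{d}{2}$ versus the paper's $2x\cdot\nabla+d$, and your packaging of the right-hand side as $2\|r^2 f\|_2\|r^{-2}Au\|_2$ before splitting, which yields the identical constant.
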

As in the non-self-adjoint case, Theorem~\ref{thm:resolvent-est-sa} can be obtained as a corollary of the following a priori estimates.
\begin{theorem}[A priori estimates: self-adjoint]\label{thm:apriori-sa}
	Let $d\geq 5.$ 
	Under the hypotheses of Theorem~\ref{thm:main-sa},
 given any $z\in \R$ and $r^2f\in L^2(\R^d),$ any solution $u\in H^2(\R^d)$ of the equation $(H_V-z)u=f$ satisfies
		\begin{equation}\label{eq:apriori}
			\|\Delta u\|_{L^2(\R^d)}\leq \tilde{c}(d) \|r^2 f\|_{L^2(\R^2)},
		\end{equation}
	where $\tilde{c}(d):= \frac{1}{1-a}\frac{2(d-2)}{d(d-4)},$ with $a$ as in~\eqref{eq:sa}.
\end{theorem}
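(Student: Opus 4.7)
The plan is to derive~\eqref{eq:apriori} directly by the method of multipliers, exploiting crucially that $z\in\R$ and that $V$ is real-valued. Under these restrictions one expects to avoid the subtleties of Theorem~\ref{thm:apriori}: no change of gauge is needed, no splitting of the spectral parameter between the regions $\mathcal{S}_\textup{pos}$ and $\mathcal{S}_\textup{neg}$, and a single virial-type identity produced by one Pohozaev-like multiplier should suffice. The natural choice is
\[
Mu := x\cdot\nabla u + \tfrac{d}{2}u,
\]
where the coefficient $d/2$ is tuned precisely so that the zero-order contributions of both $V$ and $-z$ vanish and the potential appears in the resulting identity only through its radial derivative $x\cdot\nabla V$.

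I would pair the equation $(\Delta^2 + V - z)u = f$ with $\overline{Mu}$ and take the real part. For the leading (bilaplacian) term, two integrations by parts together with the commutator identity $\Delta(x\cdot\nabla u) = 2\Delta u + x\cdot\nabla\Delta u$ and a further IBP giving $\Re\int \Delta u\,\overline{x\cdot\nabla\Delta u} = -\tfrac{d}{2}\|\Delta u\|_{L^2}^2$ yield
\[
\Re\int_{\R^d}\Delta^2 u\,\overline{Mu} \;=\; 2\int_{\R^d}|\Delta u|^2.
\]
For the potential term, using $2\Re(u\,\overline{x\cdot\nabla u})=x\cdot\nabla|u|^2$ and integrating by parts against $V$, the two $\tfrac{d}{2}\int V|u|^2$ pieces cancel and one is left with $\Re\int Vu\,\overline{Mu} = -\tfrac{1}{2}\int (x\cdot\nabla V)|u|^2$. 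The same computation applied to the constant potential $-z$ gives $\Re\int(-z)u\,\overline{Mu}=0$ (this is where the reality of $z$ enters). Summing, one arrives at the Morawetz–Rellich-type identity
\[
2\int_{\R^d}|\Delta u|^2 - \tfrac{1}{2}\int_{\R^d} (x\cdot\nabla V)|u|^2 \;=\; \Re\int_{\R^d} f\,\overline{Mu}.
\]

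To extract the bound, I would apply hypothesis~\eqref{eq:sa} in the form $-\tfrac{1}{2}\int (x\cdot\nabla V)|u|^2 \geq -\tfrac{1}{2}\int [x\cdot\nabla V]_+|u|^2 \geq -2a\int|\Delta u|^2$, which absorbs into the leading term to produce
\[
2(1-a)\,\|\Delta u\|_{L^2}^2 \;\le\; \Re\int_{\R^d} f\,\overline{Mu}.
\]
The right-hand side is then estimated by Cauchy–Schwarz and by the sharp Hardy–Rellich~\eqref{eq:HR} and Rellich~\eqref{eq:Rellich} inequalities, which give $\|r^{-1}\nabla u\|_{L^2}\le \tfrac{2}{d}\|\Delta u\|_{L^2}$ and $\|r^{-2}u\|_{L^2}\le \tfrac{4}{d(d-4)}\|\Delta u\|_{L^2}$, respectively. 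Combining,
\[
\Bigl|\Re\!\int f\,\overline{Mu}\Bigr|
\le \|r^2 f\|_{L^2}\!\bigl(\|r^{-1}\nabla u\|_{L^2} + \tfrac{d}{2}\|r^{-2}u\|_{L^2}\bigr)
\le \tfrac{4(d-2)}{d(d-4)}\,\|r^2 f\|_{L^2}\|\Delta u\|_{L^2},
\]
and dividing through yields the stated constant $\tilde c(d)=\tfrac{1}{1-a}\tfrac{2(d-2)}{d(d-4)}$.

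The main technical obstacle is the rigorous justification of the integrations by parts under the minimal regularity assumed in Theorem~\ref{thm:main-sa}: the multiplier $Mu$ grows like $|x||\nabla u|+|u|$, so a priori the products $f\,\overline{Mu}$ and $(x\cdot\nabla V)|u|^2$ need not be globally integrable. I would therefore perform the computation first with $\chi_R Mu$ in place of $Mu$, where $\chi_R(x)=\chi(|x|/R)$ is a smooth radial cutoff, and then let $R\to\infty$: the commutator terms $[\chi_R,\Delta^2]u$ are supported on the annulus $R\le|x|\le 2R$ and are controlled by the global $L^2$ bound on $\Delta u$ (via Rellich and Hardy–Rellich), so they vanish in the limit, while the bulk integrals converge by dominated convergence thanks to the weighted bounds $r^{-2}u, r^{-1}\nabla u\in L^2(\R^d)$ and to the $L^1_\textup{loc}$ integrability of $[x\cdot\nabla V]_+|u|^2$ inherited from~\eqref{eq:sa}. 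The local Sobolev hypothesis $V\in W^{1,d/4}_\textup{loc}$ assumed in Theorem~\ref{thm:main-sa} is precisely what makes $(x\cdot\nabla V)|u|^2$ meaningful as a distributional pairing.
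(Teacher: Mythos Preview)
Your approach is essentially identical to the paper's: the multiplier $Mu = x\cdot\nabla u + \tfrac{d}{2}u$ is precisely half of the paper's $A\bar u = (2x\cdot\nabla + d)\bar u$, and the resulting virial identity, the use of~\eqref{eq:sa}, and the Hardy--Rellich/Rellich estimates on the right-hand side match line by line (identity~\eqref{eq:A1} is just twice yours). The only difference is in the regularization scheme: the paper cuts off the solution to $u_R=\mu_R u$ and then, crucially, uses difference quotients (Lemma~\ref{lemma:A}) to cope with the fact that the multiplier is not an admissible $H^2$ test function, whereas your cutoff $\chi_R Mu$ handles integrability at infinity but does not by itself restore the missing derivative needed to pair against $\Delta u$ in the weak formulation.
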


\subsection{Organisation of the paper}
The paper is organised as follows. 
In Section~\ref{sec:identities} we collect the basic identities  
on which the technique of multipliers is based.
Section~\ref{sec:self-adjoint} is devoted to the proof of the results in the self-adjoint setting, 
namely Theorems~\ref{thm:main-sa}--\ref{thm:apriori-sa}.
Finally, in Section~\ref{sec:non-self-adjoint} we prove our main results valid for non-self-adjoint operators, 
namely Theorems~\ref{thm:main-nsa}--\ref{thm:apriori}.

\section{Multipliers identities}\label{sec:identities}

In this section we develop the method of multipliers which provides us with useful identities for suitable solutions of resolvent-type equations associated to the bilaplacian operator.

In the non-self-adjoint case we will need to develop the method for solutions of the following resolvent problem
\begin{equation}\label{eq:resolvent}
	\Delta^2u -zu=f,
\end{equation} 
where $z\in \C$ 
and $f\colon \R^d\to \C$ is measurable. 
We collect the identities and their proofs in the following lemma.

\begin{lemma}\label{lemma:identities}
	Let $u\in H^2(\R^d)$ be any solution of~\eqref{eq:resolvent} with $r^2 f\in L^2_\textup{loc}(\R^d).$
	The following identities hold true:
	\begin{align}
		\label{eq:S1}
		\int_{\R^d} |\Delta u|^2
		-\Re z\int_{\R^d} |u|^2
		&=\Re \int_{\R^d} f \bar{u},
		\tag{$\mathcal{S}_1$}\\
		\label{eq:S2}
		-\Im z \int_{\R^d} |u|^2
		&= \Im \int_{\R^d} f \bar{u}.
		\tag{$\mathcal{S}_2$}
		\end{align}
		\end{lemma}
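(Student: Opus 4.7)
The plan is the standard one for the method of multipliers: test the resolvent equation~\eqref{eq:resolvent} against the pure multiplier $\bar u$ and split the resulting complex identity into its real and imaginary parts. Concretely, I multiply $\Delta^2 u - z u = f$ by $\bar u$ and integrate over $\R^d$. The only non-routine piece is the biharmonic term, for which I want
$$\int_{\R^d} (\Delta^2 u)\,\bar u = \int_{\R^d} |\Delta u|^2 \,.$$
Since $u\in H^2(\R^d)$ provides $\Delta u\in L^2(\R^d)$ but does not a priori control third-order derivatives, this pairing has to be interpreted via the quadratic form associated to $\Delta^2$.

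To make the integration by parts rigorous I would insert a smooth radial cutoff $\chi_R(x):=\chi(x/R)$ with $\chi\equiv 1$ on $B_1$ and compactly supported, test the equation against $\chi_R\bar u$, perform two integrations by parts at fixed $R$, and then let $R\to\infty$. The cutoff contributions vanish in the limit because $\chi_R u\to u$ in $H^2(\R^d)$ (dominated convergence on $u$, $\nabla u$ and $\Delta u$) and the bilinear form $(v,w)\mapsto \int \Delta v\,\overline{\Delta w}$ is continuous on $H^2(\R^d)$. For the right-hand side I would use Cauchy--Schwarz with the weight pair $(r^2,r^{-2})$ together with the Rellich inequality~\eqref{eq:Rellich}:
$$\left| \int_{\R^d} f\bar u \right| \leq \|r^2 f\|_{L^2(\R^d)}\,\|r^{-2} u\|_{L^2(\R^d)} \leq \frac{1}{\sqrt{C_\textup{R}}}\,\|r^2 f\|_{L^2(\R^d)}\,\|\Delta u\|_{L^2(\R^d)} \,,$$
which is finite under the natural global integrability of $r^2 f$ that will be in force whenever the lemma is applied later.

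Once the complex scalar identity
$$\int_{\R^d} |\Delta u|^2 \;-\; z\int_{\R^d} |u|^2 \;=\; \int_{\R^d} f\bar u$$
is established, the statements~\eqref{eq:S1} and~\eqref{eq:S2} drop out at once by taking real and imaginary parts, since $\int|\Delta u|^2$ and $\int|u|^2$ are real. The main obstacle is therefore purely analytical: justifying the global double integration by parts for $u$ of merely $H^2$-regularity and ensuring the finiteness of the pairing $\int f\bar u$ in the whole space; the algebraic content of the two identities is otherwise elementary.
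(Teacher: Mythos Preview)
Your proposal is correct and follows essentially the same route as the paper: multiply the resolvent equation by $\bar u$, integrate, and take real and imaginary parts of the resulting complex identity. The paper's proof is considerably terser---it simply notes that $\bar u\in H^2(\R^d)$ is an admissible test function in the weak formulation (so that the biharmonic term is by definition $\int|\Delta u|^2$) and does not spell out the cutoff approximation or the Rellich-based control of $\int f\bar u$ that you sketch; your additional rigour is not required here but is not wrong either.
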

		\begin{proof}

			Identity~\eqref{eq:S1} is obtained multiplying~\eqref{eq:resolvent} by the symmetric multiplier $v:=\bar{u}\in H^2(\R^d),$ 
			integrating over $\R^d,$
			 integrating by parts, taking the real part of the resulting identity and integrating by parts again. 
			 Identity~\eqref{eq:S2} is obtained again by multiplication by $v:=\bar{u}\in H^2(\R^d),$ integrating over $\R^d,$ integrating by parts and taking the imaginary part of the resulting identity.  
\end{proof}

In the self-adjoint case we will need an alternative identity for suitable \emph{compactly supported} solutions of a more general resolvent-type equation associated to the bilaplacian operator, namely
\begin{equation}\label{eq:resolvent-general}
\Delta^2 u + Vu - zu=f + g,
\end{equation} 
where $z\in \R,$ and $f,g\colon \R^d \to \C$ are measurable functions. The identity that we will need and its proof is contained in the following lemma.
\begin{lemma}\label{lemma:A}
	Let $u\in H^2(\R^d)$ be any compactly supported solution of~\eqref{eq:resolvent-general}. Assume $r^2f\in L^2_{\textup{loc}}(\R^d)$ and $g\in L^2_{\textup{loc}}(\R^d),$ moreover $V\in W^{1,d/4}_{\textup{loc}}(\R^d).$ Then one has 
	 \begin{equation}
		\label{eq:A1}		
		4\int_{\R^d}|\Delta u|^2
		-\int_{\R^d} x\cdot \nabla V|u|^2
		=\Re \int_{\R^d} f (2x\cdot \nabla + d)\bar{u}
		+ \Re \int_{\R^d} g (2x\cdot \nabla + d)\bar{u}.
		\tag{$\mathcal{A}$}
	 \end{equation}
\end{lemma}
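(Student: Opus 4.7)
The plan is to multiply equation~\eqref{eq:resolvent-general} by the dilation-type multiplier $(2\,x\cdot\nabla + d)\bar u$, integrate over $\R^d$, and take the real part. Because $u$ is compactly supported and $u\in H^2(\R^d)$, Sobolev embedding gives $u\in L^{2d/(d-4)}_{\textup{loc}}$, so together with $V\in W^{1,d/4}_{\textup{loc}}$ (and $f,g$ locally integrable against $u$ and $x\cdot\nabla \bar u$ under the stated hypotheses) all the products below are integrable and, after approximation by smooth compactly supported functions via mollification, all integrations by parts below are justified with no boundary terms.

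First I would observe the Pohozaev-type identity for any compactly supported $w\in H^1(\R^d)$:
\begin{equation*}
\Re\int_{\R^d} w\,(2\,x\cdot\nabla + d)\bar w
\;=\;\int_{\R^d} x\cdot\nabla|w|^2 + d\int_{\R^d}|w|^2
\;=\;0,
\end{equation*}
using $\div x=d$. Applied with $w=u$ this kills the $zu$ contribution, and, crucially, applied with $w=\Delta u$ it will kill the extra free term appearing in the biharmonic computation.

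The potential contribution is handled similarly: since $\Re[\bar u\,(2\,x\cdot\nabla)u]=x\cdot\nabla |u|^2$, an integration by parts against the vector field $Vx$ (valid under $V\in W^{1,d/4}_{\textup{loc}}$) yields
\begin{equation*}
\Re\int_{\R^d} Vu\,(2\,x\cdot\nabla + d)\bar u
\;=\;\int_{\R^d} V\bigl(x\cdot\nabla|u|^2 + d|u|^2\bigr)
\;=\;-\int_{\R^d} x\cdot\nabla V\,|u|^2 .
\end{equation*}

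The main step is the bilaplacian term. I would integrate by parts twice to move one Laplacian onto the multiplier,
\begin{equation*}
\int_{\R^d}\Delta^2 u\,(2\,x\cdot\nabla+d)\bar u
\;=\;\int_{\R^d}\Delta u\,\Delta\bigl[(2\,x\cdot\nabla+d)\bar u\bigr],
\end{equation*}
and then use the commutator identity $\Delta(x\cdot\nabla v)=x\cdot\nabla\Delta v+2\Delta v$ to compute
\begin{equation*}
\Delta\bigl[(2\,x\cdot\nabla + d)\bar u\bigr]
\;=\;(2\,x\cdot\nabla+d+4)\,\Delta\bar u .
\end{equation*}
Setting $w:=\Delta u$ and invoking the Pohozaev identity above, the $(2\,x\cdot\nabla+d)$-piece disappears and only the constant $4$ survives:
\begin{equation*}
\Re\int_{\R^d}\Delta^2 u\,(2\,x\cdot\nabla+d)\bar u
\;=\;4\int_{\R^d}|\Delta u|^2.
\end{equation*}
Collecting the three contributions and moving the $\Re\int(f+g)(2\,x\cdot\nabla+d)\bar u$ to the right-hand side yields exactly~\eqref{eq:A1}. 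The only technical obstacle is the justification of the two integrations by parts for the bilaplacian term when $u$ is merely in $H^2$ (so a priori $\Delta(x\cdot\nabla \bar u)$ involves third derivatives of $u$): this is resolved by approximating $u$ with $u_\varepsilon\in C^\infty_c(\R^d)$ supported in a fixed neighbourhood of $\supp u$, performing all manipulations for $u_\varepsilon$, and passing to the limit using $\|\Delta u_\varepsilon-\Delta u\|_{L^2}\to 0$, $\|u_\varepsilon-u\|_{L^{2d/(d-4)}}\to 0$ and the regularity of $V$.
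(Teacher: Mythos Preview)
Your formal computation is correct and matches exactly what the paper calls the ``formal'' derivation: multiply by the dilation generator $A\bar u=(2x\cdot\nabla+d)\bar u$, use the commutator $[\Delta,x\cdot\nabla]=2\Delta$, and invoke the Pohozaev-type cancellation $\Re\int w(2x\cdot\nabla+d)\bar w=0$ with $w=u$ and $w=\Delta u$. On this algebraic content the two proofs coincide.

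The gap is in your regularisation of the kinetic term. You propose to mollify $u$ to $u_\varepsilon\in C^\infty_c$, ``perform all manipulations for $u_\varepsilon$'', and pass to the limit using $\|\Delta u_\varepsilon-\Delta u\|_{L^2}\to 0$. But this cannot work as written. If you literally replace $u$ by $u_\varepsilon$ everywhere, then $u_\varepsilon$ no longer solves the equation (a Friedrichs commutator $Vu_\varepsilon-(Vu)\!*\!\rho_\varepsilon$ appears), so the link to the right-hand side $f,g$ is broken. If instead you use $(2x\cdot\nabla+d)\bar u_\varepsilon$ as an $H^2$ test function in the weak formulation for the actual solution $u$, the kinetic term reads $\Re\int\Delta u\,(2x\cdot\nabla+d+4)\Delta\bar u_\varepsilon$, and you must show $\Re\int\Delta u\cdot 2x\cdot\nabla\Delta\bar u_\varepsilon\to -d\|\Delta u\|_2^2$. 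This does \emph{not} follow from $L^2$ convergence of $\Delta u_\varepsilon$: it involves $\nabla\Delta u_\varepsilon$, which for $u\in H^2\setminus H^3$ is unbounded in $L^2$ as $\varepsilon\to 0$. In short, the very obstacle you flag---that $\Delta(x\cdot\nabla\bar u)$ contains third derivatives---is not removed by your mollification step.

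The paper handles this by a different regularisation: rather than mollifying $u$, it replaces the derivative in the \emph{multiplier} by a symmetrised difference quotient, taking $v_\delta:=d\,u+x_k[\partial_k^\delta+\partial_k^{-\delta}]u$. Since difference quotients preserve $H^2$, $v_\delta$ is an admissible test function for the equation satisfied by $u$ itself, so no error appears on the right-hand side. The limit $\delta\to 0$ in the kinetic term is then obtained from the elementary identity $2\Re(\bar\psi\,\partial_k^\delta\psi)=\partial_k^\delta|\psi|^2-\delta|\partial_k^\delta\psi|^2$ together with $L^2$-continuity of translations, and the $f$-term is controlled via a separate lemma showing $\|(\partial_k^\delta u-\partial_k u)/|x|\|_2\to 0$ (using the Hardy--Rellich inequality). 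This difference-quotient device is the substantive technical ingredient of the paper's proof; it is precisely what your mollification sketch would need to replace.
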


\begin{proof}
Identity~\eqref{eq:A1} is \emph{formally} obtained multiplying~\eqref{eq:resolvent-general} by $v:=A \bar{u},$ with $A$ being the skew-symmetric first order operator $A:=x\cdot \nabla + \nabla\cdot x=2x\cdot \nabla + d$
			(generator of dilation up to a constant), 
			integrating the identity over $\R^d,$ integrating by parts, taking the real part of the resulting identity and integrating by parts again.
			
			Nevertheless, since $2x\cdot \nabla u$ does not necessarily belong to $H^2(\R^d),$ the test function $v$ defined above might not be an admissible test function. Thus, for a rigorous proof of~\eqref{eq:A1}, a suitable regularisation argument is needed: Motivated by \cite{CossettiKrejcirik20}, we consider the following alternative test function $v_{\delta}:= d u +x \cdot [\nabla^\delta + \nabla^{-\delta}]u:= d u + x_k [\partial_k^\delta + \partial_k^{-\delta}]u$ where
\begin{equation*}
	\partial_k^\delta u(x):= \frac{\tau_k^\delta u(x)-u(x)}{\delta},
	\qquad \text{with} \quad \tau_k^\delta u(x):=u(x+ \delta e_k),
	\qquad k=1, 2, \dots, d,
\end{equation*}
with $\delta \in \R \setminus \{0\}$ is the standard \emph{difference quotient} of $u.$ Here and in the following we use the Einstein summation convention for repeated indices. Since $u$ is compactly supported and since we have replaced the standard gradient with the difference quotient, it is now clear that $v_{\delta}$ as defined above belongs to $H^2(\R^d)$ and therefore can be used as a test function in the weak formulation of~\eqref{eq:resolvent-general}. Then we take the real part of the resulting identity obtaining
\begin{equation}\label{eq:weak-real}
	\Re \langle \Delta u, \Delta v_{\delta} \rangle
	+ \Re \langle Vu, v_\delta \rangle 
	=\Re (z\langle u, v_{\delta}\rangle)
	+\Re \langle f, v_{\delta} \rangle
	+\Re \langle g, v_{\delta} \rangle.
\end{equation}
Now we treat each term in~\eqref{eq:weak-real} separately.

We start from the kinetic contribution. Namely
\begin{equation*}
	\Re \langle \Delta u, \Delta v_{\delta} \rangle
	=d \int_{\R^d} |\Delta u|^2 
	+ \Re \int_{\R^d} \Delta u x_k [\partial_k^\delta + \partial_k ^{-\delta}] \Delta \overline{u}
	+ 2 \Re \int_{\R^d} \Delta u [\partial_k^\delta + \partial_k ^{-\delta}] \partial_k \overline{u}.
\end{equation*}
Using the identity
\begin{equation}\label{eq:Re-diff}
	2\Re(\overline{\psi} \partial_k^\delta \psi)=\partial_k^\delta |\psi|^2 - \delta |\partial_k^\delta \psi|^2,
	\qquad k=1,2, \dots, d,
\end{equation}
valid for every $\psi\colon \R^d \to \C,$ one has
\begin{multline*}
	\Re \langle \Delta u, \Delta v_{\delta} \rangle
	=d \int_{\R^d} |\Delta u|^2 
	+ \frac{1}{2} \int_{\R^d} x_k [\partial_k^\delta + \partial_k ^{-\delta}] |\Delta u|^2\\
	-\frac{\delta}{2}\int_{\R^d} x_k \left(|\partial_k^\delta \Delta u|^2 - |\partial_k^{-\delta} \Delta u|^2 \right)
	+ 2 \Re \int_{\R^d} \Delta u [\partial_k^\delta + \partial_k ^{-\delta}] \partial_k \overline{u}.
\end{multline*}
Integrating by parts in the second term of the right-hand-side of the previous identity and making explicit the difference quotient and changing variable in the third term gives
\begin{equation*}
	\Re \langle \Delta u, \Delta v_{\delta} \rangle
	= \frac{1}{2} \int_{\R^d} |\tau_k^\delta \Delta u - \Delta u|^2
	+ 2\Re \int_{\R^d} \Delta u [\partial_k^\delta + \partial_k ^{-\delta}] \partial_k \overline{u}.
\end{equation*} 
Here we have used the integration by parts formula for different quotients
\begin{equation}\label{eq:intparts}
	\int_{\R^d} \varphi \partial_k^{\pm \delta} \psi
	= - \int_{\R^d} (\partial_k^{\mp} \varphi) \psi
\end{equation} 
which holds true for every $\varphi, \psi \in L^2(\R^d)$ (see~\cite[Sec. 5.8.2]{Evans-book}).
Passing to the limit as $\delta$ goes to zero one gets
\begin{equation}\label{eq:K-limit}
	\Re \langle \Delta u, \Delta v_{\delta} \rangle
	\xrightarrow{\delta \to 0}
	4\int_{\R^d} |\Delta u|^2,
\end{equation}
where here we have used the $L^2$-continuity of the translations and the strong $L^2$-convergence of the difference quotients to standard derivatives.

Now we consider the term depending on the spectral parameter. Since $z\in \R$ one has
\begin{equation*}
	\Re \left ( z \langle u, v_{\delta} \rangle\right)
	=z d \int_{\R^d}|u|^2 
	+ z \int_{\R^d} x_k \Re \left(u [\partial_k^\delta + \partial_k^{-\delta}] \overline{u}\right).
\end{equation*}
Using~\eqref{eq:Re-diff} and the integration by parts formula~\eqref{eq:intparts} gives
\begin{equation*}
	\Re \left ( z \langle u, v_{\delta} \rangle\right)
	=\frac{z}{2} \int_{\R^d} |\tau_k^\delta u-u|^2.
\end{equation*}
Using the $L^2$-continuity of the translations on the right-hand-side one gets
\begin{equation}\label{eq:spectral-limit}
	\Re(z\langle u, v_{\delta} \rangle)
	\xrightarrow{\delta \to 0}
	0.
\end{equation}

Now we consider the term depending on $V,$ namely
\begin{equation*}
	\Re \langle Vu, v_{\delta} \rangle 
	=d \int_{\R^d} V |u|^2 
	+ \Re \int_{\R^d} V u x_k [\partial_k^\delta + \partial_k^{-\delta}] \overline{u}.
\end{equation*}
We will show that 
\begin{equation}\label{eq:limit-V}
	\Re \langle Vu, v_{\delta} \rangle 
	\xrightarrow{\delta\to 0}
	-\int_{\R^d} x\cdot \nabla V |u|^2.
\end{equation}
Using~\eqref{eq:Re-diff} and the integration by parts formula~\eqref{eq:intparts} gives
\begin{equation*}
	\Re \int_{\R^d} V u x_k [\partial_k^\delta + \partial_k^{-\delta}] \overline{u}
	=-\frac{1}{2} \int_{\R^d} [\partial_k^\delta + \partial_k^{-\delta}] (x_k V) |u|^2
	+ \frac{1}{2} \int_{\R^d} \partial_k^\delta (x_k V) |\tau_k^\delta u-u|^2.
\end{equation*}
Using H\"older inequality one has
\begin{equation*}
	\left|
	\int_{\R^d} (\partial_k^\delta -\partial_k)(x_k V) |u|^2	\right|
	\leq \|(\partial_k^\delta -\partial_k)(x_k V)\|_{d/4} \|u\|_{2d/(d-4)}
		\xrightarrow{\delta \to 0} 0,
\end{equation*}
indeed, since $x_k V\in W^{1,d/4}_{loc},$ the term $\|(\partial_k^\delta -\partial_k)(x_k V)\|_{d/4}$ goes to zero as $\delta$ goes to zero (using the $L^p$-convergence of the difference quotients), moreover $\|u\|_{2d/(d-4)}$ is finite by Sobolev's embeddings. Similarly
\begin{equation*}
	\left|
	\int_{\R^d} \partial_k^\delta (x_k V) |\tau_k^\delta u-u|^2	\right|
	\leq \|\partial_k^\delta (x_k V)\|_{d/4} \|\tau_k^\delta u-u\|_{2d/(d-4)}
		\xrightarrow{\delta \to 0} 0.
\end{equation*}
From this we get
\begin{equation*}
	\Re \int_{\R^d} V u x_k [\partial_k^\delta + \partial_k^{-\delta}] \overline{u} 
	\xrightarrow{\delta \to 0}
	-\int_{\R^d} \partial_k(x_k V)|u|^2
	=-d\int_{\R^d} V |u|^2
	-\int_{\R^d} x\cdot \nabla V |u|^2.
\end{equation*}
From this,~\eqref{eq:limit-V} follows immediately.

We consider now the term depending on the source term $f.$ Namely
\begin{equation*}
	\Re \langle f, v_{\delta} \rangle 
	=d \Re \int_{\R^d} f \overline{u} 
	+ \Re \int_{\R^d} f x_k [\partial_k^\delta + \partial_k^{-\delta}] \overline{u}.
\end{equation*}
We want to show that under the hypothesis on $f$ one has
\begin{equation}\label{eq:limit-f}
	\Re \langle f, v_{\delta} \rangle 
	\xrightarrow{\delta\to 0}
	d \Re \int_{\R^d} f \overline{u} +
	2\Re \int_{\R^d} f x_k \partial_k \overline{u}.
\end{equation}
In order to show~\eqref{eq:limit-f} we need the following lemma which shows that under some higher order regularity assumptions the difference quotients converge to the standard derivatives also for suitable weighted $L^2$ spaces.
\begin{lemma}\label{lemma:f-diff}
	Let $d\geq 5$ and $u\in H^2(\R^d).$ Then the following weighted-$L^2$-convergence holds
	\begin{equation}\label{eq:f-diff}
		\left\|\frac{\partial_k^\delta u - \partial_k u}{|x|}\right\|_2
		\xrightarrow{\delta\to 0}
		 0,
		 \qquad k\in \{1,2,\dots,d\}.
	\end{equation} 
\end{lemma}
\begin{proof}
First of all one notices that using the fundamental theorem of calculus one has
\begin{equation}\label{eq:ftc}
	\partial_k^\delta u(x)=\int_0^1 \partial_k u(x+\delta te_k) dt.
\end{equation}
Using~\eqref{eq:ftc} we have
\begin{equation*}
	\begin{split}
		\left\|\frac{\partial_k^\delta u - \partial_k u}{|x|}\right\|_2^2
		&=\int_{\R^d} \frac{1}{|x|^2} \left|\int_0^1 \partial_k u(x+\delta t e_k) - \partial_k u(x) \,dt \right|^2 \, dx\\
		&\leq \int_{\R^d} \frac{1}{|x|^2} \int_0^1 |\partial_k u(x+\delta t e_k) - \partial_k u(x)|^2\, dt \, dx\\
		&\leq \int_0^1 \int_{\R^d} \frac{| \nabla  \left[u(x+\delta t e_k)- u(x)\right]|^2}{|x|^2}\, dt\, dx\\
		&\leq \frac{1}{C_\textup{HR}} \int_0^1 \int_{\R^d} |\Delta u(x+\delta t e_k) - \Delta u(x)|^2\, dt \, dx
		=:\int_0^1 f_\delta(t)\, dt,
	\end{split}
\end{equation*}
where $C_{\textup{HR}}$ is as in~\eqref{eq:HR}.
Observe that $f_\delta (t)\leq 4 \|\Delta u\|_2^2,$ moreover $f_\delta(t)$ goes to zero as $\delta$ goes to zero, due to the $L^2$-continuity of the translations. From this, the thesis follows as a consequence of the Lebesgue convergence theorem.
\end{proof}
With Lemma~\ref{lemma:f-diff} at hands we can now prove~\eqref{eq:limit-f}. One has
\begin{equation*}
	\begin{split}
	\left|\int_{\R^d} f x_k (\partial_k^\delta -\partial_k)\overline{u} \right|
	&\leq \int_{\R^d} |x|^2|f| \left|\frac{(\partial_k^\delta -\partial_k)\overline{u}}{|x|}\right| \\
	&\leq \||x|^2 f\|_2 \left \| \frac{\partial_k^\delta u- \partial_k u}{|x|}\right\|_2.
	\end{split}
\end{equation*}
Since $|x|^2 f\in L^2_\text{loc}(\R^d),$ the limit~\eqref{eq:limit-f} follows using~\eqref{eq:f-diff} in Lemma~\ref{lemma:f-diff}.

It is left to treat the term depending on $g,$ namely
\begin{equation*}
	\Re \langle g, v_{\delta} \rangle 
	=d \Re \int_{\R^d} g \overline{u} 
	+ \Re \int_{\R^d} g x_k [\partial_k^\delta + \partial_k^{-\delta}] \overline{u}.
\end{equation*}
 One has the following limits
\begin{equation}\label{eq:limit-g}
	\Re  \langle g, \nabla v_{\delta} \rangle
	\xrightarrow{\delta \to 0} 
	d\Re \int_{\R^d}	g\overline{u}
	+2\Re \int_{\R^d} g x_k \partial_k \overline{u}.
\end{equation}
Indeed, since $g$ belongs to $L^2(\R^d),$ the limit above follows just using Cauchy Schwarz and the $L^2$-convergence of the difference quotients (notice that the unbounded weight $x_k$ does not affect the convergence since $u$ is compactly supported).

Passing to the limit in~\eqref{eq:weak-real} using~\eqref{eq:K-limit},~\eqref{eq:spectral-limit},~\eqref{eq:limit-V},~\eqref{eq:limit-f} and~\eqref{eq:limit-g} one obtains~\eqref{eq:A1}.
\end{proof}

\begin{remark}\label{Rem.Mourre}
We remark that identity~\eqref{eq:A1} in Lemma~\ref{lemma:identities} 
is closely related to the commutator theory \emph{\`a la} Mourre
of conjugate operators 
(see~\cite{Mourre80} for the pioneering work and~\cite{Amrein1996} for more recent developments)
and the virial theorem in quantum mechanics
(see~\cite{Weidmann1967} for a first rigorous treatment 
and~\cite[Sec.~XIII.13]{ReedSimonIV} for an overview and further references).
Indeed, one can easily see that the following identity holds
\begin{equation*}
	2\Re\langle \Delta^2 u, Au \rangle = \langle u, [\Delta^2, A]u \rangle, 
\end{equation*}
with $A$ as above, namely $A= x\cdot\nabla+\nabla\cdot x.$ Thus, the multiplier method used above to prove identity~\eqref{eq:A1} is related to the computation of a commutator as in the standard positive commutator theory. 
If the Mourre theory for Schrödinger operators has been extensively studied and many results have been obtained using this method in many areas of spectral theory, for biharmonic operators the bibliography is much scarcer. Nonetheless there are works using this approach to prove, for instance, Jensen-Kato type decay estimates for the evolution operator (see~\cite{Feng-Soffer-Yao_2018}).  
\end{remark}

\section{Self-adjoint setting}\label{sec:self-adjoint}
This section is concerned with the proof of the results in the self-adjoint case, namely Theorems~\ref{thm:main-sa}--\ref{thm:apriori-sa}. 

In the proof of Theorem~\ref{thm:main-sa} we will need the following result.
\begin{lemma}\label{lemma:harmonic-zero}
	Under the assumption of Theorem~\ref{thm:main-sa}, 
	let $u\in H^2(\R^d)$ be any solution of
\begin{equation}\label{eq:eigenv-eq}
	H_Vu=zu,
\end{equation}
If $\Delta u=0$ then $u=0.$ 
\end{lemma}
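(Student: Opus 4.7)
The plan is short and relies on a classical Liouville-type argument rather than on anything specific to the biharmonic equation or to the assumptions on $V$. The hypothesis that $u$ solves $H_V u=zu$ plays no direct role: once we know $\Delta u = 0$ together with $u\in H^2(\R^d)$, we are simply dealing with an $L^2$ harmonic function on the whole of $\R^d$, and such a function must vanish identically.

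More concretely, I would proceed as follows. First, since $u\in H^2(\R^d)$ we have $u\in L^2(\R^d)$, and the equation $\Delta u = 0$ holds in the distributional sense. By Weyl's lemma (or standard elliptic regularity applied to $\Delta$), $u$ is then smooth, in fact real analytic, and harmonic in the classical sense on all of $\R^d$. Next, I would invoke the mean value property: for every $x\in\R^d$ and every $R>0$,
\begin{equation*}
	u(x)=\frac{1}{|B(x,R)|}\int_{B(x,R)} u(y)\,dy.
\end{equation*}
Applying the Cauchy--Schwarz inequality to the right-hand side yields
\begin{equation*}
	|u(x)|\leq \frac{\|u\|_{L^2(B(x,R))}}{|B(x,R)|^{1/2}}\leq \frac{\|u\|_{L^2(\R^d)}}{|B(x,R)|^{1/2}}.
\end{equation*}
Letting $R\to\infty$ forces $u(x)=0$, and since $x$ was arbitrary, $u\equiv 0$, as required.

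There is no serious obstacle: the only small point worth checking is that $u$, a priori just an $H^2$ (hence $L^2$) distributional solution of $\Delta u=0$, genuinely satisfies the mean value identity. This is handled, as indicated, by elliptic regularity for the Laplacian, after which the argument is purely a Liouville-type decay estimate on the ball integral. It is worth noting, for the record, that the additional assumptions on $V$ and $z$ from Theorem~\ref{thm:main-sa} (and the eigenvalue equation~\eqref{eq:eigenv-eq} itself) are not actually used here; they are stated only for consistency with the context in which the lemma will be applied.
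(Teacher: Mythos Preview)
Your proof is correct, but it takes a genuinely different route from the paper's. The paper explicitly states that it ``presents a proof which does not require to argue by Liouville's theorem,'' so your approach is precisely the one the authors chose to avoid. Instead, they use the eigenvalue equation: from $\Delta u=0$ they deduce $Vu=zu$, then for $z\neq 0$ they multiply by $\bar u$, integrate, and apply H\"older together with the Sobolev embedding $\dot H^2\hookrightarrow L^{2d/(d-4)}$ to get
\[
|z|\,\|u\|_2^2\leq \int_{\R^d}|V||u|^2\leq \|V\|_{d/4}\|u\|_{2d/(d-4)}^2\leq C(d)\|V\|_{d/4}\|\Delta u\|_2^2=0,
\]
while the case $z=0$ is handled by a short contradiction argument of the same flavour. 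Your Liouville argument is cleaner and, as you correctly observe, makes no use of the assumptions on~$V$ or of~\eqref{eq:eigenv-eq}; the paper's argument, by contrast, keeps the proof entirely within the Sobolev/H\"older toolkit already in play and actually exploits the hypothesis $V\in W^{1,d/4}_{\textup{loc}}$ (through the Sobolev embedding), which explains why the lemma is stated under the assumptions of Theorem~\ref{thm:main-sa}.
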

\begin{proof}
We present a proof which does not require to 
argue by Liouville's theorem.
	Since $\Delta u=0,$ in particular one has that $u$ satisfies (weakly)
	\begin{equation} \label{eq:deltau-zero}
		Vu=zu. 
	\end{equation}	
	From this we want to show that $u=0.$ We distinguish two cases:
	\begin{itemize}
		\item If $z \neq 0.$ We multiply~\eqref{eq:deltau-zero} by $\overline{u}$ and we integrate over $\R^d$ obtaining
		\begin{equation*}
			\int_{\R^d} V |u|^2 = z \int_{\R^d} |u|^2.
		\end{equation*}
		In particular one has
		\begin{equation*}
			\begin{split}
				|z| \int_{\R^d} |u|^2&\leq \int_{\R^d} |V||u|^2\\
				&\leq \|V\|_{d/4}\|u\|_{2d/(d-4)}^2\\
				&\leq C(d)\|V\|_{d/4}\|\Delta u\|_2\\
				&=0,
			\end{split}
		\end{equation*}
		where we used, in order, H\"older inequality and Sobolev embeddings for homogeneous Sobolev spaces. In particular $C(d)$ is the constant in the Sobolev embeddings. This gives
		\begin{equation*}
			|z| \int_{\R^d} |u|^2\leq 0,
		\end{equation*}
		that implies $u=0.$
	\item If $z=0.$ If $V=0,$ then $u$ solves $\Delta^2 u=0$ which implies $u=0$ as $\sigma_\textup{p}(\Delta^2)=\varnothing.$ So we can assume that $V\neq 0.$ Suppose by contradiction that $u\neq 0,$ thus $\psi:=|V|^{1/2} u$ is also non trivial. Indeed, since $u$ solves $\Delta^2 u + Vu=0,$ in particular one has $\Delta^2 u=- V |V|^{-1/2} |V|^{1/2} u.$ Thus if $\psi$ were trivial then $0\in \sigma_\textup{p}(\Delta^2)$ that is a contradiction.  Nevertheless, using the computations of the previous case one gets
	\begin{equation*}
		\int_{\R^d} |V||u|^2
		\leq C(d)\|V\|_{d/4}\|\Delta u\|_2
		=0.
	\end{equation*}
	Thus $\psi:=|V|^{1/2}u=0$ which is a contradiction. This concludes the proof.
	\qedhere
	\end{itemize} 
\end{proof}

\begin{proof}[Proof of Theorem~\ref{thm:main-sa}]
Assume by contradiction that there exists a non trivial (weak) solution $u\in H^2(\R^d)$ to the eigenvalue equation~\eqref{eq:eigenv-eq}
with $H_V$ defined in~\eqref{eq:biharmonic-op}. Using the method of multipliers we will show that $\Delta u=0$ and therefore, using Lemma~\ref{lemma:harmonic-zero}, $u=0.$ 

Similarly to the case of self-adjoint Schrödinger operators~\cite[Thm. 3.4]{CossettiFanelliKrejcirik20}, the triviality of solution of~\eqref{eq:eigenv-eq} will be obtained from the single identity~\eqref{eq:A1}.

In order to use~\eqref{eq:A1} we need to approximate $u$ by a compactly supported function. Let $\mu \colon [0,\infty)\to [0,1]$ be a smooth function such that 
\begin{equation}\label{eq:mucutoff}
\mu(r)=
	\begin{cases}
		\; 1 \quad \text{if} \quad 0\leq r\leq 1,\\
		\; 0 \quad \text{if}\quad  r\geq 2.		
	\end{cases}	
\end{equation}
Given a positive number $R,$ we set $\mu_R(x):=\mu(|x|R^{-1}).$ Then $\mu_R\colon \R^d\to [0,1]$ is such that 
\begin{equation*}
	\mu_R=1 \text{ in } B_R(0),
	\quad \mu_R=0 \text{ in } \R^d\setminus B_{2R}(0),
	\quad |\nabla \mu_R|\leq cR^{-1},
	\quad |\Delta \mu_R|\leq c R^{-2},
\end{equation*}
where $B_{R}(0)$ stands for the open ball centred at the origin and with radius $R>0$ and $c>1$ is a suitable constant independent of $R.$ Now we define the approximating family of compactly supported functions $u_R:=u \mu_R.$ Since $u$ satisfies weakly equation~\eqref{eq:eigenv-eq}, one can easily check that $u_R$ defined above satisfies
\begin{equation}\label{eq:weak-uR}
	\langle \Delta u_R, \Delta v \rangle
	+\langle V u_R, v \rangle
	=z\langle u_R, v\rangle
	+ \langle \text{err}^{(1)}(R), v \rangle
	+ \langle \text{err}^{(2)}(R), \nabla v\rangle,
	\qquad \forall \, v\in H^2(\R^d),
\end{equation}
where
\begin{equation}\label{eq:err12}
	\text{err}^{(1)}(R)
	:= \Delta^2 \mu_R u 
	+ 4 \nabla \Delta \mu_R\nabla u 
	+2 \nabla \partial_j \mu_R \nabla \partial_j u,
	\qquad \text{and} \qquad
	\text{err}^{(2)}(R)
	:=-2\partial_j \mu_R \nabla \partial_j u 
	-2 \nabla \mu_R \Delta u,
\end{equation}
for $j\in \{1,2,\dots, d\}.$ Here, as above, we have used the usual convention that repeated indices are \soutD{implicitly} summed over.

Since $u_R\in H^2(\R^d)$ is compactly supported and satisfies~\eqref{eq:resolvent-general} with $f=0$ and $g= \text{err}^{(1)}(R) + \nabla \cdot  \text{err}^{(2)}(R)$ we can use~\eqref{eq:A1} obtaining
\begin{multline}\label{eq:weak-real-man}
	4\int_{\R^d} |\Delta u_R|^2
	-\int_{\R^d} [x\cdot \nabla V]_+ |u_R|^2
	\leq
	+d\Re \int_{\R^d}	\text{err}^{(1)}(R)\overline{u_R}
	+2\Re \int_{\R^d}\text{err}^{(1)}(R) x_k \partial_k \overline{u_R}\\
	+d \Re \int_{\R^d} \text{err}^{(2)}(R) \nabla \overline{u_R}
	+2\Re \int_{\R^d} \text{err}^{(2)}(R) e_k \partial_k \overline{u_R} 
	+2\Re \int_{\R^d} \text{err}^{(2)}(R) x_k \partial_k \nabla  \overline{u_R},
\end{multline}
where the inequality comes from having discarded the negative part of the radial derivative of $V,$ namely $[x\cdot \nabla V]_-.$

Now we want to pass to the limit $R$ to infinity. First of all notice that, simply using the definition of the cut-off $\mu_R,$ one has that $u_R$ converges to $u$ in $H^2(\R^d)$ in the limit $R$ to infinity.

Now we want to show that  
\begin{equation}\label{eq:error-L2}
\|\text{err}^{(k)}(R)\|_2\xrightarrow{R\to \infty} 0,
\quad \text{and} \quad 
\||x|\text{err}^{(k)}(R)\|_2\xrightarrow{R\to \infty} 0, 
\quad k\in \{1,2\}.
\end{equation} 
We start with $\text{err}^{(1)}(R).$ Using definition~\eqref{eq:err12} it is enough to show that 
\begin{equation*}
	\text{I}:=\int_{\R^d} |\Delta^2 \mu_R|^2 |u|^2
	+\int_{\R^d} |\nabla \Delta \mu_R|^2 |\nabla u|^2
	+\int_{\R^d} |\nabla \partial_j \mu_R|^2 |\nabla \partial_j u|^2
	\xrightarrow{R\to \infty} 0,
\end{equation*}
and 
\begin{equation*}
	\text{I}_{|x|}:=\int_{\R^d} |x|^2 |\Delta^2 \mu_R|^2 |u|^2
	+\int_{\R^d} |x|^2 |\nabla \Delta \mu_R|^2 |\nabla u|^2
	+\int_{\R^d} |x|^2 |\nabla \partial_j \mu_R|^2 |\nabla \partial_j u|^2
	\xrightarrow{R\to \infty} 0,
\end{equation*}
for $j\in\{1,2,\dots, d\}.$
Simply using the properties of the cut-off $\mu_R$ one has
\begin{equation*}
	\text{I}\lesssim
	\frac{1}{R^8} \int_{\R^d} |u|^2
	+\frac{1}{R^6} \int_{\R^d} |\nabla u|^2
	+\frac{1}{R^4} \int_{\R^d} |\nabla \partial_j u|^2
	\xrightarrow{R\to \infty} 0,
\end{equation*}
since $u\in H^2(\R^d).$ Similarly, recalling that since only derivatives of $\mu_R$ appear then $R\leq |x|\leq 2R,$ one has
\begin{equation*}
	\text{I}_{|x|}\lesssim
	\frac{R^2}{R^8} \int_{\R^d} |u|^2
	+\frac{R^2}{R^6} \int_{\R^d} |\nabla u|^2
	+\frac{R^2}{R^4} \int_{\R^d} |\nabla \partial_j u|^2
	\xrightarrow{R\to \infty} 0.
\end{equation*}
We continue with $\text{err}^{(2)}(R).$ Using~\eqref{eq:err12} it is enough to show that
\begin{equation*}
	\text{II}:=\int_{\R^d} |\partial_j \mu_R|^2 |\nabla \partial_j u|^2 
	+ \int_{\R^d} |\nabla \mu_R|^2 |\Delta u|^2
	\xrightarrow{R\to \infty} 0,
\end{equation*}
and
\begin{equation*}
	\text{II}_{|x|}:=\int_{\R^d} |x|^2 |\partial_j \mu_R|^2 |\nabla \partial_j u|^2 
	+ \int_{\R^d} |x|^2 |\nabla \mu_R|^2 |\Delta u|^2
	\xrightarrow{R\to \infty} 0,
\end{equation*}
for $j\in {1,2,\dots, d}.$ Similarly as in the case of $\text{err}^{(1)}(R)$ one has
\begin{equation*}
\text{II}
\lesssim 
\frac{1}{R^2}\int_{\R^d} |\nabla \partial_j u|^2 
+ \frac{1}{R^2}\int_{\R^d} |\Delta u|^2 
\xrightarrow{R\to \infty} 0.
\end{equation*}
Moreover, as above, we have
\begin{equation*}
	\begin{split}
	\text{II}_{|x|}
&\lesssim 
\frac{R^2}{R^2}\int_{R\leq |x|\leq 2R} |\nabla \partial_j u|^2 
+ \frac{R^2}{R^2}\int_{R\leq |x|\leq 2R} |\Delta u|^2 \\
&\leq 
\int_{|x|\geq R} |\nabla \partial_j u|^2 
+ \int_{ |x|\geq R} |\Delta u|^2 
\xrightarrow{R\to \infty} 0,
	\end{split}
\end{equation*}
since $u\in H^2(\R^d).$ From this one gets~\eqref{eq:error-L2}. From~\eqref{eq:error-L2} and from the $H^2$-convergence of $u_R$ to $u,$ using the Cauchy-Schwarz inequality it follows that the right-hand-side of~\eqref{eq:weak-real-man} goes to zero as $R$ goes to $+\infty.$ As for the left-hand-side of~\eqref{eq:weak-real-man} using for the first term  the $H^2$-convergence of $u_R$ to $u$ and for the second term the monotone convergence theorem one obtains

\begin{equation*}
4\int_{\R^d} |\Delta u|^2 - \int_{\R^d} [x\cdot \nabla V]_+ |u|^2\leq 0.
\end{equation*}
Using~\eqref{eq:sa} one gets $\int_{\R^d}|\Delta u|^2=0,$ which gives $u=0.$
\end{proof}

The proof of Theorem~\ref{thm:resolvent-est-sa} is a direct consequence of Theorem~\ref{thm:apriori-sa}, 
thus we prove the latter first.
\begin{proof}[Proof of Theorem~\ref{thm:apriori-sa}]
Let $u\in H^2(\R^d)$ be a solution of the resolvent equation 
\begin{equation}\label{eq:resolventV}
	H_V u -zu=f.
\end{equation}
Similarly to the proof of the previous result we introduce the compactly supported approximating functions $u_R:=u \mu_R,$ with $\mu_R(x):=\mu(|x|R^{-1})$ and $\mu$ as in~\eqref{eq:mucutoff}. Since $u$ satisfies weakly~\eqref{eq:resolventV}, one can easily check that $u_R$ satisfies
 \begin{equation}\label{eq:weak-uR-f}
	\langle \Delta u_R, \Delta v \rangle
	+\langle V u_R, v \rangle
	=z\langle u_R, v\rangle
	+ \langle f_R, v\rangle
	+ \langle \text{err}^{(1)}(R), v \rangle
	+ \langle \text{err}^{(2)}(R), \nabla v\rangle,
	\qquad \forall \, v\in H^2(\R^d),
\end{equation}
where $\text{err}^{(1)}(R)$ and $\text{err}^{(2)}(R)$ are as in~\eqref{eq:err12}. Thus, since $u_R\in H^2(\R^d)$ is compactly supported and satisfies~\eqref{eq:resolvent-general} with $f=f_R$ and $g= \text{err}^{(1)}(R) + \nabla \cdot  \text{err}^{(2)}(R)$ we can use~\eqref{eq:A1} obtaining
\begin{multline}\label{eq:weak-real-man-f}
	4\int_{\R^d} |\Delta u_R|^2
	-\int_{\R^d} [x\cdot \nabla V]_+ |u_R|^2
	\leq
	+d\Re \int_{\R^d}	f_R\overline{u_R}
	+2\Re \int_{\R^d} f_R x_k \partial_k \overline{u_R}
	\\
	+d\Re \int_{\R^d}	\text{err}^{(1)}(R)\overline{u_R}
	+2\Re \int_{\R^d}\text{err}^{(1)}(R) x_k \partial_k \overline{u_R}\\
	+d \Re \int_{\R^d} \text{err}^{(2)}(R) \nabla \overline{u_R}
	+2\Re \int_{\R^d} \text{err}^{(2)}(R) e_k \partial_k \overline{u_R} 
	+2\Re \int_{\R^d} \text{err}^{(2)}(R) x_k \partial_k \nabla  \overline{u_R},
\end{multline}
where the inequality comes from having discarded the negative part of the radial derivative of $V,$ namely $[x\cdot \nabla V]_-.$

Now we want to pass to the limit $R$ to infinity. We need just to check the terms depending on $f_R$ as the others were already considered in the proof of Theorem~\ref{thm:main-nsa} above. We will show that 
\begin{equation*}
	d\Re \int_{\R^d}	f_R\overline{u_R}
	+2\Re \int_{\R^d} f_R x_k \partial_k \overline{u_R}
	\xrightarrow{R\to\infty}
	d\Re \int_{\R^d}	f\overline{u}
	+2\Re \int_{\R^d} f x_k \partial_k \overline{u}.
\end{equation*}	
Indeed 
\begin{equation*}
	\begin{split}
	\left | \int_{\R^d} f_R \overline{u_R} -f \overline{u} \right|
	&\leq \int_{\R^d} |f_R-f| |u_R| 
	+ \int_{\R^d} |f| |u_R-u|\\
	&\leq \||x|^2 f (\mu_R-1)\|_2 \left\|\frac{u_R}{|x|^2}\right\|_2
	+ \||x|^2 f \|_2 \left\|\frac{u_R-u}{|x|^2}\right\|_2\\
	&\leq \frac{1}{\sqrt{C_\text{R}}} \||x|^2 f (\mu_R-1)\|_2 \|\Delta u_R\|_2
	+\frac{1}{\sqrt{C_\text{R}}} \||x|^2 f\|_2 \|\Delta (u_R-u)\|_2\\
	&\phantom{\leq}\xrightarrow{R\to \infty} 0,
	\end{split}
\end{equation*}
where $C_\text{R}$ is as in~\eqref{eq:Rellich} and where the limit follows from the Lebesgue convergence theorem and the $H^2$-convergence of $u_R$ to $u.$ Similarly we have
\begin{equation*}
\begin{split}
\left | \int_{\R^d} f_Rx_k \partial_k \overline{u_R} - fx_k \partial_k \overline{u} \right |
&\leq \int_{\R^d} |x||f_R-f||\nabla u_R| + \int_{\R^d} |x||f||\nabla (u_R -u)|\\
&\leq \||x|^2 f (\mu_R-1)\|_2 \left\|\frac{\nabla u_R}{|x|}\right\|_2
+ \||x|^2 f\|_2 \left\|\frac{\nabla (u_R- u)}{|x|}\right\|_2\\
&\leq \frac{1}{\sqrt{C_\text{HR}}} \||x|^2 f (\mu_R-1)\|_2 \|\Delta u_R\|_2
 + \frac{1}{\sqrt{C_\text{HR}}} \||x|^2 f\|_2 \|\Delta (u_R-u)\|_2\\
	&\phantom{\leq}\xrightarrow{R\to \infty} 0,
\end{split}
\end{equation*}
where $C_\text{R}$ is as in~\eqref{eq:HR} and where the limit follows from the Lebesgue convergence theorem and the $H^2$-convergence of $u_R$ to $u.$ Thus, passing to the limit $R$ to infinity in~\eqref{eq:weak-real-man-f} one gets

	\begin{equation}\label{eq:first}
		4\int_{\R^d} |\Delta u|^2 
		-\int_{\R^d} [x\cdot \nabla V]_+|u|^2		
		\leq 2\Re \int_{\R^d} f x\cdot \overline{\nabla u} + d \Re \int_{\R^d} f \bar{u}.
	\end{equation} 
	We leave for now the first term of the right-hand-side of~\eqref{eq:first}. As for the second term, using the Cauchy-Schwarz inequality and~\eqref{eq:HR} one gets
	\begin{equation}\label{eq:rhsI}
		\Big| 2\Re \int_{\R^d} f x\cdot \overline{\nabla u} \Big|
		\leq 2\||x|^2 f\|_{2} \left \|\frac{\nabla u}{|x|}\right \|_2
		\leq \frac{4}{d}   \||x|^2 f\|_{2} \| \Delta u\|_2.
	\end{equation}
	For the third term of the right-hand-side of~\eqref{eq:first}, using the Cauchy-Schwarz inequality and Rellich inequality~\eqref{eq:Rellich} one has
	\begin{equation}\label{eq:rhsII}
		\Big| d\Re \int_{\R^d} f\bar{u} \Big|
		\leq d\||x|^2 f\|_{2} \left\| \frac{u}{~|x|^2}\right\|_2
		\leq \frac{4}{d-4} \||x|^2 f\|_{2}\|\Delta u\|_2.
	\end{equation}
	Plugging~\eqref{eq:rhsI} and~\eqref{eq:rhsII} in~\eqref{eq:first} and dividing by 4 we obtain
	\begin{equation*}
		\int_{\R^d} |\Delta u|^2 
		- \frac{1}{4}  \int_{\R^d} [x\cdot \nabla V]_+|u|^2	
		\leq\frac{2(d-2)}{d(d-4)}\||x|^2 f\|_{2} \|\Delta u \|_2.
	\end{equation*}
	Using the repulsivity condition~\eqref{eq:sa} for $V$ gives
	\begin{equation*}
		(1-a)\int_{\R^d} |\Delta u|^2
		\leq \frac{2(d-2)}{d(d-4)}\||x|^2 f\|_{2} \|\Delta u \|_2.
	\end{equation*}
	Dividing by $(1-a)\|\Delta u\|_2$ we get the thesis.
\end{proof}
Once we have Theorem~\ref{thm:apriori-sa}, the proof of Theorem~\ref{thm:resolvent-est-sa} follows as a direct application of the Rellich inequality~\eqref{eq:Rellich}. 
\begin{proof}[Proof of Theorem~\ref{thm:resolvent-est-sa}]
From the Rellich inequality~\eqref{eq:Rellich} and the a priori estimate~\eqref{eq:apriori}, one has that for any $u\in H^2(\R^d)$ solution of the resolvent equation~\eqref{eq:resolventV} the following chain of inequalities hold true
\begin{equation*}
	\|r^{-2}u\|_2\leq \frac{1}{\sqrt{C_R}}\|\Delta u\|_2\leq \frac{1}{\sqrt{C_R}}\frac{1}{1-a}\frac{2(d-2)}{d(d-4)}\|r^2f\|_2.
	\qedhere
\end{equation*}
\end{proof}

\section{Non-self-adjoint setting}\label{sec:non-self-adjoint}
This section is devoted to the proof of the results stated for non-self-adjoint operators, namely Theorems~\ref{thm:main-nsa}--\ref{thm:apriori}. In the proof of these we will need the following non-self-adjoint version of Lemma~\ref{lemma:harmonic-zero}
\begin{lemma}\label{lemma:harmonic-zero-nsa}
	Under the assumption of Theorem~\ref{thm:main-nsa}, given any solution $u\in H^2(\R^d)$ of~\eqref{eq:eigenv-eq}. If $\Delta u=0$ then $u=0.$
\end{lemma}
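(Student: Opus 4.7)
The argument mirrors that of Lemma~\ref{lemma:harmonic-zero}, with the role of the assumption $V\in L^{d/4}_{\textup{loc}}$ replaced by the weighted smallness condition~\eqref{eq:smallness}. First, substituting $\Delta u = 0$ into the eigenvalue equation $\Delta^2 u + Vu = zu$ forces $\Delta^2 u = 0$ and therefore $Vu = zu$ almost everywhere. I would then split the analysis into two cases according to whether $z$ vanishes.

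For $z\neq 0$, I would start from the pointwise identity $|V|^2|u|^2=|z|^2|u|^2$, multiply by $r^4$, integrate, and then apply the hypothesis~\eqref{eq:smallness} with $\psi=u$ followed by the Hardy-Rellich inequality~\eqref{eq:HR}. This produces the chain
\begin{equation*}
|z|^2 \int_{\R^d} r^4 |u|^2 \;=\; \int_{\R^d} r^4 |V|^2 |u|^2 \;\leq\; a^2 \int_{\R^d} \frac{|\nabla u|^2}{r^2} \;\leq\; \frac{a^2}{C_{\textup{HR}}} \int_{\R^d} |\Delta u|^2 \;=\; 0,
\end{equation*}
so that $r^2 u = 0$ a.e., whence $u=0$.

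For $z=0$, the previous computation collapses (the prefactor $|z|^2$ vanishes) and~\eqref{eq:smallness} gives no direct information on $u$. Following the pattern of Lemma~\ref{lemma:harmonic-zero}, if $V\equiv 0$ then $u$ solves $\Delta^2 u =0$ and the well-known fact $\sigma_{\textup{p}}(\Delta^2)=\varnothing$ on $L^2(\R^d)$ yields $u=0$. If $V\not\equiv 0$, then $Vu=0$ and I would invoke the homogeneous Sobolev inequality $\|u\|_{L^{2d/(d-4)}(\R^d)} \leq C(d) \|\Delta u\|_{L^2(\R^d)}$, valid for $d\geq 5$, which combined with $\Delta u = 0$ immediately forces $u=0$.

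The main obstacle, compared with the self-adjoint case, is that the H\"older-Sobolev chain of Lemma~\ref{lemma:harmonic-zero} (which relied on $V\in L^{d/4}_{\textup{loc}}$) is unavailable. The key observation is that~\eqref{eq:smallness} combined with the Hardy-Rellich inequality~\eqref{eq:HR} provides a suitable substitute: it bridges the weighted integral $\int r^4 |V|^2 |u|^2$ to a multiple of $\|\Delta u\|_{L^2}^2$, which vanishes by the hypothesis $\Delta u=0$.
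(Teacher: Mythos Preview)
Your proof is correct. For $z\neq 0$ you follow the same underlying idea as the paper---use~\eqref{eq:smallness} together with the Hardy--Rellich inequality~\eqref{eq:HR} to bound a weighted integral by $\|\Delta u\|_2^2=0$---though you reach it via the pointwise identity $r^4|V|^2|u|^2=|z|^2 r^4|u|^2$, whereas the paper splits $\int |V||u|^2\le \|r^2 Vu\|_2\,\|r^{-2}u\|_2$ by Cauchy--Schwarz and then applies~\eqref{eq:smallness},~\eqref{eq:HR} and~\eqref{eq:Rellich} to the two factors.

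The genuine difference is the case $z=0$. You invoke the homogeneous Sobolev embedding $\|u\|_{2d/(d-4)}\le C(d)\|\Delta u\|_2$, which with $\Delta u=0$ immediately forces $u=0$; this is essentially Liouville's theorem and in fact makes your whole case split on $z$ (and the observation $Vu=0$) superfluous, since the same line handles all $z$ at once. The paper, by contrast, explicitly sets out to avoid Liouville-type arguments: it shows by contradiction that if $u\neq 0$ then $\psi:=|V|^{1/2}u\neq 0$ (otherwise $0\in\sigma_\textup{p}(\Delta^2)$), and then reuses the $z\neq 0$ chain to obtain $\int |V||u|^2=0$, a contradiction. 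Your route is shorter; the paper's route stays entirely within the weighted inequalities already in play and would transfer to settings where the homogeneous Sobolev embedding is not available.
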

\begin{proof}
Again, we present a proof which does not require Liouville's theorem.
	As in the proof of Lemma~\ref{lemma:harmonic-zero} one has that since $\Delta u=0,$ in particular $u$ satisfies~\eqref{eq:deltau-zero}.
	From this we want to show that $u=0.$ We distinguish two cases:
	\begin{itemize}
		\item If $z \neq 0.$ We multiply~\eqref{eq:deltau-zero} by $\overline{u}$ and  we integrate over $\R^d$ obtaining
		\begin{equation*}
			\int_{\R^d} V |u|^2 = z \int_{\R^d} |u|^2.
		\end{equation*}
		In particular one has
		\begin{equation*}
			\begin{split}
				|z| \int_{\R^d} |u|^2&\leq \int_{\R^d} |V||u|^2\\
				&\leq \||x|^2 V u\|_{2}\Big\|\frac{u}{|x|^2}\Big\|_{2}\\
				&\leq \frac{a}{\sqrt{C_\textup{HR}} \sqrt{C_\textup{R}}} \|\Delta u\|_2^2\\
				&=0,
			\end{split}
		\end{equation*}
		where we used, in order, the Cauchy-Schwarz inequality, assumption~\eqref{eq:smallness} and the Hardy-Rellich and Rellich inequalities~\eqref{eq:HR} and~\eqref{eq:Rellich}, respectively. This gives
		\begin{equation*}
			|z| \int_{\R^d} |u|^2\leq 0,
		\end{equation*}
		that implies $u=0.$
	\item If $z=0.$ We proceed in the same way as in the proof of Lemma~\ref{lemma:harmonic-zero}. If $V=0,$ then $u$ solves $\Delta^2 u=0$ which implies $u=0$ as $\sigma_\textup{p}(\Delta^2)=\varnothing.$ So we can assume that $V\neq 0.$ Suppose by contradiction that $u\neq 0,$ thus $\psi:=|V|^{1/2} u$ is also non trivial. Indeed, since $u$ solves $\Delta^2 u + Vu=0,$ in particular one has $\Delta^2 u=- V |V|^{-1/2} |V|^{1/2} u.$ Thus if $\psi$ were trivial then $0\in \sigma_\textup{p}(\Delta^2)$ that is a contradiction.  Nevertheless, using the computations of the previous case one gets
	\begin{equation*}
		\int_{\R^d} |V||u|^2
		\leq \frac{a}{\sqrt{C_\textup{HR}} \sqrt{C_\textup{R}}} \|\Delta u\|_2^2
		=0.
	\end{equation*}
	Thus $\psi:=|V|^{1/2}u=0$ which is a contradiction. This concludes the proof.
	\qedhere
	\end{itemize} 
\end{proof}

\subsection{Absence of eigevalues outside a cone: Proof of Theorem~\ref{thm:absence-out}}
We start from the proof of Theorem~\ref{thm:absence-out} 
which excludes presence of eigenvalues outside a cone containing the positive semi-axis with varying opening.
This can be proved easily through the method of multipliers just using the identity involving the easiest symmetric multiplier.

\begin{proof}[Proof of Theorem~\ref{thm:absence-out}]
Assume by contradiction that $u$ is a solution of the eigenvalue problem~\eqref{eq:eigenv-eq} with $|\Im z|>\delta \Re z.$
Using identities~\eqref{eq:S1} and~\eqref{eq:S2} with $g=\pm 1$ and $f=-Vu$ gives 
\begin{equation}\label{eq:p}
	\pm\int_{\R^d} |\Delta u|^2 \mp \Re z \int_{\R^d} |u|^2=\pm \Re\int_{\R^d} f \bar{u},
\end{equation}
and 
\begin{equation}\label{eq:pm}
	\mp \Im z \int_{\R^d} |u|^2=\pm \Im \int_{\R^d} f\bar{u}.
\end{equation} 
Summing the first of~\eqref{eq:p} multiplied by $\delta$ to~\eqref{eq:pm} one has
\begin{equation*}
	\delta\int_{\R^d} |\Delta u|^2 -\delta\Re \int_{\R^d} f \bar{u} \mp \Im \int_{\R^d} f\bar{u} = (\delta \Re z\pm \Im z)\int_{\R^d} |u|^2.
\end{equation*}
The terms involving $f$ (thus $V$) can be estimated as follows
\begin{equation*}
	\left|-\delta\Re \int_{\R^d} f\bar{u} \mp\Im\int_{\R^d} f\bar{u}\right|
	\leq (\delta +1)\||x|^2 f\|_2 \left\|\frac{u}{|x|^2} \right\|_2
	\leq \frac{\delta +1}{\sqrt{C_R}} \||x|^2 f\|_2 \|\Delta u\|_2,
\end{equation*}
where in the last step we used the Rellich inequality~\eqref{eq:Rellich}. 

Recalling that we chose $f=-Vu$ and using condition~\eqref{eq:V-cond} we get
\begin{equation*}
\delta \left[1-\left(1+ \frac{1}{\delta} \right)\frac{a_\delta}{\sqrt{C_R}}\right]\int_{\R^d} |\Delta u|^2 \leq (\delta \Re z\pm \Im z)\int_{\R^d} |u|^2.	
\end{equation*}
This requires $\delta \Re z\pm \Im z\geq 0$ unless $u=0.$ Since $|\Im z|>\delta \Re z$ then $u=0.$ This concludes the proof.
\end{proof}

\subsection{Total absence of eigenvalues: Proof of Theorem~\ref{thm:main-nsa}}

Following the scheme described in Section~\ref{Sec.sketch},
in order to prove Theorem~\ref{thm:main-nsa} we will need a different strategy if the spectral parameter $z$ lies in the region $\mathcal{S}_{\textup{pos}}:=\{ z\in \C\colon \Re z\geq 0\}$ or if it lies in the region $\mathcal{S}_\textup{neg}:=\{z\in \C\colon \Re z<0\}.$ The case of $z\in \mathcal{S}_\textup{neg}$ is standard and can be proved essentially as in the proof of Theorem~\ref{thm:absence-out}. In the region $\mathcal{S}_{\textup{pos}}:=\{ z\in \C\colon \Re z\geq 0\}$ one needs the decomposition~\eqref{eq:decomposition-op} and reduces the analysis to the study of Schrödinger operators. 
More precisely, we will need the following two results about uniform resolvent estimates for solutions of the resolvent equation associated to the Laplacian. In the first result, Lemma~\ref{lemma:FKV} below, we shall consider the situation when the spectral parameter might approach or lie on the real line. The second result, Lemma~\ref{lemma:schroe} below, instead covers only the case of eigenvalues far from the essential spectrum.  
\begin{lemma}\label{lemma:FKV}
	Let $d\geq 3.$
	Assume $\kappa\in \C$ such that 
	$\Re \kappa\geq |\Im \kappa|$ 
	and $f\in L^2(\R^d, |x|dx).$ Then any solution $u\in H^1(\R^d)$ of the equation $(\Delta + \kappa)u=f$ satisfies
	\begin{equation}\label{eq:FKV}
		\|\nabla u^-\|_2^2
		\leq \frac{2d(d-3)}{d-2} \||x|f\|_2\|\nabla u^-\|_2 
		+ \frac{\sqrt{2}}{\sqrt{d-2}} \||x|f\|_2^{3/2}\|\nabla u^-\|_2^{1/2},
	\end{equation}
	where $u^-(x):=e^{-i(\Re \kappa)^{1/2}\sgn(\Im \kappa)|x|}u(x).$
\end{lemma}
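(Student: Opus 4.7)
The plan is to apply the method of multipliers to the Helmholtz-type equation $(\Delta+\kappa)u=f$, combining an energy identity in the spirit of Lemma~\ref{lemma:identities}, a Morawetz/virial identity, and the pointwise expansion of $|\nabla u^-|^2$. Throughout, set $k:=(\Re\kappa)^{1/2}\sgn(\Im\kappa)$ so that $u^-=e^{-ik|x|}u$, and observe the algebraic identity
\[
|\nabla u^-|^2=|\nabla u|^2+k^2|u|^2-2k\,\Im\!\left(\bar u\,\tfrac{x}{|x|}\cdot\nabla u\right),
\]
which, after integration, identifies the three quantities to control: $\|\nabla u\|_2^2$, $(\Re\kappa)\|u\|_2^2$, and the radial-current term.

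First, I would test the equation against $\bar u$ and extract real and imaginary parts, exactly as in Lemma~\ref{lemma:identities} but for $\Delta+\kappa$:
\[
\|\nabla u\|_2^2-\Re\kappa\,\|u\|_2^2=-\Re\langle f,u\rangle,\qquad \Im\kappa\,\|u\|_2^2=-\Im\langle f,u\rangle.
\]
Combined with the Hardy inequality $\|u/|x|\|_2\le\frac{2}{d-2}\|\nabla u\|_2$ and Cauchy--Schwarz with the $|x|$-weight, this yields $|\langle f,u\rangle|\le\tfrac{2}{d-2}\||x|f\|_2\|\nabla u\|_2$, which both controls $\bigl|\|\nabla u\|_2^2-\Re\kappa\|u\|_2^2\bigr|$ and, after taking square roots, gives the key nonlinear bound
\[
\sqrt{\Re\kappa}\,\|u\|_2\ \lesssim\ \|\nabla u\|_2+\||x|f\|_2^{1/2}\|\nabla u\|_2^{1/2}.
\]
This is the mechanism that will produce the $\||x|f\|_2^{3/2}\|\nabla u^-\|_2^{1/2}$ term in the final estimate.

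Second, I would derive a Morawetz-type identity to control the cross term $-2k\int\Im(\bar u\,\tfrac{x}{|x|}\cdot\nabla u)$. The natural multiplier is of the form $A\bar u$, where $A=2x\cdot\nabla+d$ is the generator of dilations (the same multiplier used in Lemma~\ref{lemma:A}, but now for the second-order operator); after integration by parts and taking real parts, the positive part of the resulting identity dominates a multiple of the radial current together with tangential gradient contributions, while the right-hand side is bounded by $\||x|f\|_2$ paired with $\|A\bar u/|x|\|_2\lesssim\|\nabla u\|_2+\|u/|x|\|_2$, again tamed by Hardy. The sector hypothesis $\Re\kappa\ge|\Im\kappa|$ is used precisely here to ensure positivity of the relevant spectral combination and to control the sign of $k^2=\Re\kappa$, producing the bound $\frac{2d(d-3)}{d-2}\||x|f\|_2\|\nabla u^-\|_2$ on the radial current.

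The main obstacle is twofold. First, matching the sharp constants $\frac{2d(d-3)}{d-2}$ and $\frac{\sqrt{2}}{\sqrt{d-2}}$ requires a careful choice of Morawetz multiplier (likely $A\bar u$ supplemented by a radial lower-order weight) and a judicious use of Hardy's inequality with its optimal constant $C_{\textup{H}}=(d-2)^2/4$. Second, all occurrences of $\|\nabla u\|_2$ must ultimately be replaced by $\|\nabla u^-\|_2$; since $\|\nabla u\|_2^2-\|\nabla u^-\|_2^2$ is an algebraic combination of $(\Re\kappa)\|u\|_2^2$ and the radial current already controlled above, this replacement closes by a bootstrap, absorbing the extra terms into the two right-hand-side contributions of \eqref{eq:FKV}.
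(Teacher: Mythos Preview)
Your approach---combining the symmetric energy identities, a Morawetz-type identity, the pointwise expansion of $|\nabla u^-|^2$, and Hardy's inequality---is precisely the method of multipliers the paper invokes; the paper does not reprove the lemma but defers to identity~(62) in~\cite{FanelliKrejcirikVega18-JST} and to~\cite{CossettiFanelliKrejcirik20} for the details. The one refinement worth flagging is that the antisymmetric multiplier producing the radial-current control and the stated constants is the Morawetz multiplier with weight $\phi=|x|$, namely $\bigl[2\tfrac{x}{|x|}\cdot\nabla+\tfrac{d-1}{|x|}\bigr]\bar u$, rather than the dilation generator $A=2x\cdot\nabla+d$ you tentatively propose---as you yourself half-anticipate when mentioning a ``radial lower-order weight''; with that substitution your combination scheme matches the cited works.
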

\begin{proof}
	This result follows as a byproduct of the application
of the method of multipliers developed in~\cite{FanelliKrejcirikVega18-JST} (\emph{cfr.} identity (62) in~\cite{FanelliKrejcirikVega18-JST} and the estimates afterwards).
We refer to this work and particularly to~\cite{CossettiFanelliKrejcirik20} for the details of the proof.
\end{proof}

In the next lemma we show some suitable weighted resolvent estimates for the Schrödinger operator when we are outside a sector including the positive semi-axis. As expected, here no change of gauge are necessary.
\begin{lemma}\label{lemma:schroe}
Let $d\geq 5.$
Assume $\kappa\in \C$ such that $\Re \kappa \geq |\Im \kappa|$ and $f \in L^2(\R^d, |x|^2dx).$ Then any solution $v\in H^1(\R^d)$ of the equation $(\Delta-\kappa)v=f$ is such that $|x|^2 |\nabla v|^2 + |x|^2|v|^2\in L^1(\R^d)$ and one has
\begin{equation}\label{eq:uniform2}
\||x||\nabla v|\|_2\leq \frac{2}{d-4} \||x|^2 f\|_2.
\end{equation}
\end{lemma}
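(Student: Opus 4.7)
The plan is to apply the method of multipliers with the symmetric weight $|x|^2$, regularised by a cut-off, and then close the bound by means of a weighted Hardy inequality in the radial variable. First, starting from the cut-off $\mu_R$ introduced in~\eqref{eq:mucutoff}, the test function $|x|^2 \mu_R^2 \bar v$ lies in $H^1(\R^d)$ and is therefore admissible in the weak formulation of $(\Delta-\kappa)v=f$. Integrating by parts and taking the real part gives an identity of the form
\begin{equation*}
\int |x|^2 \mu_R^2 |\nabla v|^2 + \Re\kappa \int |x|^2\mu_R^2 |v|^2 = d \int \mu_R^2 |v|^2 - \Re \int f |x|^2 \mu_R^2 \bar v + E(R),
\end{equation*}
where $E(R)$ collects cut-off errors of the form $\int_{R\leq|x|\leq 2R} b_R(x) |v|^2$ with $b_R$ uniformly bounded in $R$; these vanish as $R\to\infty$ by the $L^2$ absolute continuity of $v$ on the annulus $R\leq|x|\leq 2R$. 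It is crucial that no error term carries $|\nabla v|^2$, since we have no \emph{a priori} weighted control on $\nabla v$.

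Next, I would pass to the limit $R\to\infty$: monotone convergence on the left and dominated convergence on the right (using $v\in L^2$ and $|x|^2 f \in L^2$) produce
\begin{equation*}
\int |x|^2 |\nabla v|^2 + \Re\kappa \int |x|^2 |v|^2 = d \int |v|^2 - \Re \int f |x|^2 \bar v,
\end{equation*}
whose right-hand-side is finite by Cauchy-Schwarz, so that $\int |x|^2|\nabla v|^2 < \infty$. The companion identity obtained from the imaginary part similarly controls $\Im\kappa \int |x|^2 |v|^2$; under the sector hypothesis $\Re\kappa \geq |\Im\kappa|$ at least one of these two contributions dominates $\int |x|^2 |v|^2$, giving the claimed $L^1$-integrability of $|x|^2(|\nabla v|^2+|v|^2)$.

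The quantitative estimate then follows from the weighted Hardy inequality
\begin{equation*}
\int_{\R^d} |v|^2 \leq \frac{4}{d^2}\int_{\R^d}|x|^2|\nabla v|^2,\qquad v\in H^1(\R^d),
\end{equation*}
proved by a one-dimensional integration by parts in the radial variable and extended to non-radial $v$ through a spherical harmonic decomposition (the angular contributions enter with a favourable sign). Applying it to $d\int|v|^2$ directly and, through $\|v\|_2 \leq \frac{2}{d}\||x||\nabla v|\|_2$, also inside the Cauchy-Schwarz bound $|\Re\int f|x|^2\bar v|\leq \||x|^2 f\|_2\|v\|_2$, and then discarding the non-negative contribution $\Re\kappa \int |x|^2|v|^2$, one arrives at
\begin{equation*}
\tfrac{d-4}{d}\,\||x|\,|\nabla v|\|_{2}^{2} \leq \tfrac{2}{d}\,\||x|^2 f\|_2\,\||x|\,|\nabla v|\|_{2},
\end{equation*}
and cancelling a factor of $\||x|\,|\nabla v|\|_2$ (which is finite by the previous step) yields~\eqref{eq:uniform2}. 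The main technical hurdle is the justification of the integration by parts and the passage to the limit $R\to\infty$ with $v$ only in $H^1$; this is manageable precisely because every cut-off error carries $|v|^2$ multiplied by a uniformly bounded weight and therefore vanishes in the limit by $L^2$-tightness of $v$.
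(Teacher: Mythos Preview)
Your proof of the main estimate~\eqref{eq:uniform2} is correct and follows the same strategy as the paper: multiplier method with the symmetric weight $|x|^2$, a cut-off regularisation, and the weighted Hardy inequality $\int|v|^2\leq\tfrac{4}{d^2}\int|x|^2|\nabla v|^2$. Your choice to place the cut-off in the test function $|x|^2\mu_R^2\bar v$ rather than in the solution (the paper works with $v_R=\mu_R v$ and the resulting error term $\text{err}(R)=\Delta\mu_R\, v + 2\nabla\mu_R\cdot\nabla v$) is a minor but clean variant: it makes the absence of $|\nabla v|^2$ in the error immediate, whereas the paper has to perform one more integration by parts on its $\text{err}(R)$ term to see this.

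One small remark on the secondary claim $|x|^2|v|^2\in L^1$: the appeal to the imaginary part is both unnecessary and not quite justified as stated, since the imaginary-part identity carries current terms of the form $\int x\cdot\Im(\bar v\,\nabla v)\,\mu_R^2$ that would still require estimation. Under the sector hypothesis $\Re\kappa\geq|\Im\kappa|$ one has either $\kappa=0$ or $\Re\kappa>0$; in the latter case the real-part identity alone already yields $\int|x|^2|v|^2<\infty$, since both terms on the left are non-negative and the right-hand side is finite. The paper's own proof does not address the boundary case $\kappa=0$ for this integrability claim either, so this is not a discrepancy with the paper.
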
 
\begin{proof}
The proof is obtained through the method of multipliers: As a starting point we want to approximate $v$ with compactly supported functions defining, for a given positive number $R,$ the auxiliary approximating family $v_R:=v \mu_R,$ with $\mu_R(x):=\mu(|x|R^{-1})$ and where $\mu$ was defined in~\eqref{eq:mucutoff}.

If $v\in H^1(\R^d)$ is any solution of $(\Delta - \kappa)v=f,$ then $v_R\in H^1(\R^d)$ and solves in a weak sense the following related problem
\begin{equation}\label{eq:appr-compact}
	(\Delta -\kappa)v_R=f_R + \text{err}(R),
\end{equation}
where 
\begin{equation}\label{eq:err(R)}
	\text{err}(R):=\Delta \mu_R v + 2\nabla \mu_R \nabla v.
\end{equation}
Multiplying the approximating equation~\eqref{eq:appr-compact} by $\phi v_R,$ for $\phi\colon \R^d \to \R$ to be chosen later but such that $\phi v_R\in H^1(\R^d)$ (since the support of $v_R$ is compact, any locally bounded $\phi$ with locally bounded partial derivatives is admissible), integrating over $\R^d,$ integrating by parts, taking the real part of the resulting identity and eventually integrating by parts again, one has
\begin{equation*}
	-\int_{\R^d}\phi |\nabla v_R|^2 
	+ \frac{1}{2}\int_{\R^d} \Delta \phi |v_R|^2 
	- \Re \kappa \int_{\R^d} \phi |v_R|^2
	=\Re \int_{\R^d} f_R\phi \overline{v_R}
	+ \Re \int_{\R^d} \text{err}(R) \phi \overline{v_R}.
\end{equation*}
Choosing $\phi=|x|^2$ and observing that $\Delta \phi=2d$ we obtain
\begin{equation}\label{eq:identity-Re}
	\int_{\R^d} |x|^2 |\nabla v_R|^2
	=d\int_{\R^d}|v_R|^2 
	- \Re \kappa \int_{\R^d} |x|^2 |v_R|^2
	-\Re \int_{\R^d} |x|^2 f_R \overline{v_R}
	- \Re \int_{\R^d} |x|^2 \text{err}(R) \overline{v_R}.
\end{equation}
Observe that since $\Re\kappa\geq |\Im\kappa|,$ in particular $\Re \kappa \geq 0,$ thus the term depending on the spectral parameter in~\eqref{eq:identity-Re} can be discarded obtaining
\begin{equation}\label{eq:ineq-Re-R}
	\int_{\R^d}|x|^2 |\nabla v_R|^2
	\leq d\int_{\R^d}|v_R|^2
	-\Re \int_{\R^d} |x|^2 f_R \overline{v_R}
	- \Re \int_{\R^d} |x|^2 \text{err}(R) \overline{v_R}.
\end{equation}
Passing to the limit $R$ to infinity one gets
\begin{equation}\label{eq:ineq-Re}
	\int_{\R^d}|x|^2 |\nabla v|^2
	\leq d\int_{\R^d}|v|^2
	-\Re \int_{\R^d} |x|^2 f \overline{v}.
\end{equation}

Indeed 
\begin{equation*}
	\int_{\R^d} |x|^2 |\nabla(v_R-v)|^2
	\leq 2 \int_{\R^d} |x|^2 |\nabla v|^2|\mu_R-1|^2
	+ \int_{\R^d} |x|^2 |v|^2 |\nabla \mu_R|^2.
\end{equation*}
The first term of the right-hand-side tends to zero thanks to the monotone convergence theorem. We see now the second term of the right-hand-side:
\begin{equation*}
	\int_{\R^d} |x|^2 |v|^2 |\nabla \mu_R|^2
	\leq c \int_{R\leq |x|\leq 2R} |v|^2.
\end{equation*}
Since $v\in L^2(\R^d),$ the right-hand-side tends to zero as $R$ goes to infinity. Using the previous estimates one gets
\begin{equation}\label{eq:limitLHS}
	\int_{\R^d} |x|^2|\nabla v_R|^2 \xrightarrow{R\to \infty} \int_{\R^d} |x|^2 |\nabla v|^2.
\end{equation}
Similarly, one has
\begin{equation}\label{eq:limit1RHS}
	\int_{\R^d} |v_R|^2 \xrightarrow{R\to \infty} \int_{\R^d} |v|^2.
\end{equation}
As for the term depending on $f$ we have
\begin{equation*}
	\begin{split}
	 \int_{\R^d} \Big||x|^2f_R \overline{v_R}- |x|^2 f\overline{v} 	\Big|
	 &\leq \int_{\R^d} |x|^2|f||\mu_R-1||v_R|
	 + \int_{\R^d} |x|^2|f||v||\mu_R-1|\\
	 &\leq \||x|^2 f (\mu_R-1)\|_2\|v_R\|_2 
	 + \||x|^2 f\|_2 \|v (\mu_R-1)\|_2.
	\end{split}
\end{equation*}
Since $|x|^2f\in L^2(\R^d),$ using the Lebesgue convergence theorem, from the previous estimates one gets
\begin{equation}\label{eq:limitfRHS}
	\int_{\R^d} |x|^2f_R \overline{v_R} \xrightarrow{R\to \infty} \int_{\R^d} |x|^2 f \overline{v}.
\end{equation}  
Now we consider the term depending on $\text{err}(R).$ We will show that this term goes to zero as $R$ goes to infinity. First of all, integrating by parts, one gets
\begin{equation*}
	\begin{split}
	\Re \int_{\R^d} |x|^2 \text{err}(R)\overline{v_R}
	&= \int_{\R^d} |x|^2 \Delta \mu_R  |v|^2 \mu_R
	+2\Re \int_{\R^d} |x|^2 \nabla \mu_R \nabla v \,\overline{v} \mu_R\\
	&=-2 \int_{\R^d} x \mu_R \nabla \mu_R |v|^2
	+ \int_{\R^d} |x|^2 |\nabla \mu_R|^2 |v|^2
	+2 \int_{\R^d} |x|^2 \mu_R \Delta \mu_R|v|^2.
	\end{split}
\end{equation*}
From this, one has the following inequality
\begin{equation*}
	\int_{\R^d} |x|^2|\text{err}(R)||v_R|
	\leq  c \int_{R\leq |x|\leq 2R} |v|^2,
\end{equation*}
for a suitable constant $c$ not dependent on $R.$
This gives
\begin{equation}\label{eq:limiteRHS}
\int_{\R^d} |x|^2 \text{err}(R) \overline{v_R} \xrightarrow{R\to \infty} 0.
\end{equation}
Passing to the limit in~\eqref{eq:ineq-Re-R} and using~\eqref{eq:limitLHS}--\eqref{eq:limiteRHS} gives~\eqref{eq:ineq-Re}.

Now we can come back to inequality~\eqref{eq:ineq-Re}.
To estimate the right-hand-side of~\eqref{eq:ineq-Re} we need the following weighted Hardy inequality
\begin{equation}\label{eq:weighted-Hardy}
	\int_{\R^d} |x|^{2\gamma} |\nabla \psi|^2\geq \frac{(d-2+2\gamma)^2}{4} \int_{\R^d} \frac{|\psi|^2}{|x|^{2-2\gamma}}, \qquad \gamma \in \R, \quad \psi\in C_0^\infty(\R^d\setminus \{0\}).
\end{equation}
Inequality~\eqref{eq:weighted-Hardy} can be proved in a similar way as the classical Hardy inequality in $d\geq 3$ (see, \emph{e.g.}~\cite[Prop.2.4]{CassanoPizzichillo18}). More precisely, we will need inequality~\eqref{eq:weighted-Hardy} for $\gamma=1,$ namely
\begin{equation}\label{eq:weighted-Hardy-gamma1}
\int_{\R^d} |x|^{2} |\nabla \psi|^2\geq \frac{d^2}{4} \int_{\R^d}|\psi|^2, \qquad \psi\in C_0^\infty(\R^d\setminus \{0\}).
\end{equation}
Using~\eqref{eq:weighted-Hardy-gamma1} in~\eqref{eq:ineq-Re} and the Cauchy-Schwarz inequality we obtain
\begin{equation*}
	\begin{split}
	\int_{\R^d}|x|^2 |\nabla v|^2&\leq d\int_{\R^d}|v|^2+ \||x|^2f\|_2\|v\|_2\\
	&\leq \frac{4}{d} \int_{\R^d} |x|^2 |\nabla v|^2 + \frac{2}{d} \||x|^2 f\|_2\||x||\nabla v|\|_2,
	\end{split}
\end{equation*}
which gives
\begin{equation*}
	\left(1-\frac{4}{d}\right)\int_{\R^d}|x|^2 |\nabla v|^2
	\leq \frac{2}{d} \||x|^2 f\|_2\||x||\nabla v|\|_2.
\end{equation*}
Notice that $1-\frac{4}{d}>0$ if $d\geq 5.$ Thus one finally gets
\begin{equation*}
	\||x||\nabla v|\|_2\leq \frac{2}{d-4} \||x|^2 f\|_2.
	\qedhere
\end{equation*} 
\end{proof}

The proof of our results 
in Theorems~\ref{thm:main-nsa}, \ref{thm:resolvent-est} 
and~\ref{thm:apriori} 
follows immediately from the following proposition for the free biharmonic operator.
\begin{proposition}\label{prop:apriori}
Let $d\geq 5.$ Given any $z\in \C$ and 
$f\in L^2(\R^d, |x|^2\,dx),$ any solution $u\in H^2(\R^d)$ of the equation $(\Delta^2-z)u=f$ satisfies
	\begin{itemize}
		\item for $z\in \mathcal{S}_\textup{pos},$ \emph{i.e.} $\Re z\geq 0,$ 
		\begin{equation}\label{eq:lr-bis}
			\|\nabla (\partial_j u)^-\|_2^2
		\leq \frac{4d(d-3)}{(d-2)(d-4)} \||x|^2f\|_2\|\nabla (\partial_j u)^-\|_2 
		+ \frac{4}{(d-4)\sqrt{(d-2)(d-4)}} \||x|^2f\|_2^{3/2}\|\nabla (\partial_j u)^-\|_2^{1/2},
		\end{equation}
		for $j\in \{1,2,\dots,d\},$
		\item for $z\in \mathcal{S}_\textup{neg},$ \emph{i.e.} $\Re z<0,$
		\begin{equation}\label{eq:ud-bis}
			\|\Delta u\|_2\leq C_R^{-1/2}\||x|^2 f\|_2,
		\end{equation}
		 with $C_R$ as in~\eqref{eq:Rellich}.
	\end{itemize}
	Here, for any suitable function $v,$ we denoted with $v^-$ the auxiliary function already defined in~\eqref{eq:auxiliary}.
\end{proposition}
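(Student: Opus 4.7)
The plan is to split according to whether $z\in\mathcal{S}_{\textup{neg}}$ or $z\in\mathcal{S}_{\textup{pos}}$, reflecting the dichotomy emphasised in Section~\ref{Sec.sketch}.

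For $z\in\mathcal{S}_{\textup{neg}}$, i.e.~$\Re z<0$, I would avoid the factorisation altogether and work directly with the bilaplacian by taking the symmetric multiplier $\bar u$, that is by invoking identity~\eqref{eq:S1} with $f$ on the right-hand side. Since $\Re z<0$ the term $-\Re z\int|u|^2$ is non-negative and can be discarded, yielding $\int|\Delta u|^2\leq\Re\int f\bar u$. Estimating the right-hand side by Cauchy--Schwarz against the weight $|x|^{-2}/|x|^2$ and invoking the Rellich inequality~\eqref{eq:Rellich} gives $\int|\Delta u|^2\leq C_R^{-1/2}\||x|^2f\|_2\|\Delta u\|_2$, which is~\eqref{eq:ud-bis} after dividing.

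For $z\in\mathcal{S}_{\textup{pos}}$ I would exploit the operator factorisation~\eqref{eq:decomposition-op}. Setting $v:=(\Delta+\sqrt z)u$, which belongs to $L^2(\R^d)$ because $u\in H^2(\R^d)$, the equation $(\Delta^2-z)u=f$ becomes $(\Delta-\sqrt z)v=f$. Since $z\in\mathcal{S}_{\textup{pos}}$ and $\sqrt z$ is the principal square root, $\kappa:=\sqrt z$ satisfies $\Re\kappa\geq|\Im\kappa|$, so Lemma~\ref{lemma:schroe} applies and produces the intermediate estimate
\begin{equation*}
\||x|\,|\nabla v|\|_2\leq\frac{2}{d-4}\||x|^2f\|_2.
\end{equation*}
In particular $\||x|\partial_jv\|_2\leq\frac{2}{d-4}\||x|^2f\|_2$ for each $j\in\{1,\dots,d\}$.

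Next, I would differentiate the equation $(\Delta+\sqrt z)u=v$ in $x_j$ (in the distributional sense, which is legitimate since $u\in H^2$), obtaining $(\Delta+\sqrt z)\partial_ju=\partial_jv$. Now $\partial_ju\in H^1(\R^d)$ plays the role of the solution and $\partial_jv$ the role of the source; since $|x|\partial_jv\in L^2$, Lemma~\ref{lemma:FKV} applies with $\kappa=\sqrt z$ (the gauge factor $e^{-i(\Re\sqrt z)^{1/2}\sgn(\Im\sqrt z)|x|}$ matches exactly the definition~\eqref{eq:auxiliary}), giving
\begin{equation*}
\|\nabla(\partial_ju)^-\|_2^2\leq\frac{2d(d-3)}{d-2}\||x|\partial_jv\|_2\|\nabla(\partial_ju)^-\|_2+\frac{\sqrt 2}{\sqrt{d-2}}\||x|\partial_jv\|_2^{3/2}\|\nabla(\partial_ju)^-\|_2^{1/2}.
\end{equation*}
Substituting the bound $\||x|\partial_jv\|_2\leq\tfrac{2}{d-4}\||x|^2f\|_2$ and simplifying the constants (note $(d-4)\sqrt{(d-2)(d-4)}=(d-4)^{3/2}\sqrt{d-2}$) yields exactly~\eqref{eq:lr-bis}.

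The main obstacle I anticipate is the rigorous justification of differentiating the intermediate equation $(\Delta+\sqrt z)u=v$ and then applying Lemma~\ref{lemma:FKV} with the weighted source $\partial_jv$: while $v\in L^2$ is clear from $u\in H^2$, the additional regularity $|x|\partial_jv\in L^2$ is not automatic and comes precisely from the output of Lemma~\ref{lemma:schroe}. One should also verify that $\partial_ju\in H^1(\R^d)$ qualifies as the class of solutions covered by Lemma~\ref{lemma:FKV}, which is the reason why the hypothesis $u\in H^2(\R^d)$ is crucial. Once these regularity issues are addressed, the chaining of the two Schrödinger-type estimates is purely algebraic.
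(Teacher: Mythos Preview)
Your proposal is correct and follows essentially the same route as the paper: the $\mathcal{S}_{\textup{neg}}$ case via identity~\eqref{eq:S1} combined with Cauchy--Schwarz and Rellich, and the $\mathcal{S}_{\textup{pos}}$ case via the factorisation~\eqref{eq:decomposition-op}, applying Lemma~\ref{lemma:schroe} to $(\Delta-\sqrt z)v=f$ and Lemma~\ref{lemma:FKV} to $(\Delta+\sqrt z)\partial_j u=\partial_j v$, then chaining the two bounds. The regularity caveats you flag (that $|x|\partial_j v\in L^2$ is supplied by Lemma~\ref{lemma:schroe} itself, and that $\partial_j u\in H^1$ comes from $u\in H^2$) are precisely the points the paper also relies on, though it does not dwell on them.
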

\begin{proof}
The proof will be different depending on whether $z$ belongs to $\mathcal{S}_\textup{pos}$ or $\mathcal{S}_\textup{neg}.$ For this reason we treat the two cases separately.
\begin{description}[style=unboxed,leftmargin=0cm]
\item[Region $\mathcal{S}_\textup{pos}$.]
As already mentioned we rewrite the resolvent equation as 
\begin{equation}\label{eq:decomposition}
	(\Delta-\sqrt{z})(\Delta+\sqrt{z})u=f
\end{equation} 
and we introduce the following notation 
\begin{equation}\label{eq:v-def}
	(\Delta + \sqrt{z})u=:v.
\end{equation}
In particular, this means that for any $j\in \{1,2,\dots, d\}$ then $\partial_j u$ is a solution of the following resolvent equation associated to the Laplacian
\begin{equation}\label{eq:resolvent-Schr1}
	(\Delta + \sqrt{z})\partial_j u=\partial_j v.
\end{equation}
Notice that if $z\in \mathcal{S}_\textup{pos}$ then in particular one has $\Re \sqrt{z} \geq |\Im \sqrt{z}|$ (see also Figure~\ref{fig:sectors}). Thus, one can use estimate~\eqref{eq:FKV} in Lemma~\ref{lemma:FKV} to obtain that for $(\partial_j u)^-$
the \emph{uniform} estimate
\begin{equation}\label{eq:uniform1}
	\|\nabla (\partial_j u)^-\|_2^2
		\leq \frac{2d(d-3)}{d-2} \||x|\partial_j v\|_2\|\nabla (\partial_j u)^-\|_2 
		+ \frac{\sqrt{2}}{\sqrt{d-2}} \||x|\partial_j v\|_2^{3/2}\|\nabla (\partial_j u)^-\|_2^{1/2}
\end{equation}
holds true, with $\partial_j u,$ $j\in \{1,2,\dots, d\}$ being the solution of~\eqref{eq:resolvent-Schr1}. 
Since $v$ satisfies $(\Delta -\sqrt{z})v=f$
(see~\eqref{eq:decomposition} and~\eqref{eq:v-def}) we can use Lemma~\ref{lemma:schroe}: plugging~\eqref{eq:uniform2} in~\eqref{eq:uniform1} we get~\eqref{eq:lr-bis}. This concludes the analysis in the region $\mathcal{S}_\textup{pos}.$
\item[Region $\mathcal{S}_\textup{neg}$.] We start noticing that since $z\in \mathcal{S}_\textup{neg}$ then $\Re z< 0.$ Thus we can use a similar reasoning as in the proof of Theorem~\ref{thm:absence-out} to get~\eqref{eq:ud-bis}:
from identity~\eqref{eq:S1} with $g=1$ we have 
\begin{equation*}
	\int_{\R^d} |\Delta u|^2 - \Re z \int_{\R^d} |u|^2= \Re\int_{\R^d} f \bar{u}.
\end{equation*}
 Discarding the positive term involving $\Re z$ and using the Cauchy-Schwarz inequality one has
 \begin{equation*}
\int_{\R^d} |\Delta u|^2 \leq \||x|^2f\|_2\|\frac{u}{|x|^2}\|_2\leq  \frac{1}{\sqrt{C_R}}\||x|^2f\|_2\|\Delta u\|_2,
 \end{equation*}
 where in the last inequality we have used the Rellich inequality~\eqref{eq:Rellich}. Dividing by $\|\Delta u\|_2$ we obtain~\eqref{eq:ud-bis}.
 \qedhere
\end{description}
\end{proof}

Using Proposition~\ref{prop:apriori} the proofs of 
Theorems~\ref{thm:main-nsa}, \ref{thm:resolvent-est} and~\ref{thm:apriori} 
follow easily once the following lemma is proved.
\begin{lemma}\label{lemma:f}
	Let $d\geq 5.$ 
	Under the hypotheses of Theorem~\ref{thm:main-nsa},
	for any $\psi\in H^2(\R^d)$ one has
	\begin{align}
	\label{eq:V1}
		\||x|^2V\psi\|_2&\leq \frac{a}{\sqrt{C_\textup{H}}} \sum_{l=1}^d \|\nabla (\partial_l \psi)^-\|_2,\\
		\label{eq:V2}
		\||x|^2V\psi\|_2&\leq \frac{a}{\sqrt{C_\textup{HR}}} \|\Delta \psi\|_2,
	\end{align}
	where $a$ is as in~\eqref{eq:smallness}, $C_\textup{H}$ and $C_\textup{HR}$ are the Hardy constant (see~\eqref{eq:Hardy}) and the Hardy-Rellich constant (see~\eqref{eq:HR}), respectively and where $(\partial_l \psi)^-$ is defined as in~\eqref{eq:auxiliary}.
\end{lemma}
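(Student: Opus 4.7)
The plan is to observe first that hypothesis~\eqref{eq:smallness} of Theorem~\ref{thm:main-nsa} is exactly the statement that, for every $\psi \in H^2(\R^d)$,
\begin{equation*}
	\||x|^2 V \psi\|_2 \leq a \left\|\frac{\nabla \psi}{|x|}\right\|_2.
\end{equation*}
Therefore both estimates~\eqref{eq:V1} and~\eqref{eq:V2} reduce to establishing two different upper bounds on the weighted norm $\|\nabla \psi/|x|\|_2$. The second estimate~\eqref{eq:V2} is then immediate: the Hardy--Rellich inequality~\eqref{eq:HR} gives
\begin{equation*}
	\left\|\frac{\nabla \psi}{|x|}\right\|_2 \leq \frac{1}{\sqrt{C_\textup{HR}}} \|\Delta \psi\|_2,
\end{equation*}
and substituting in the preceding display yields~\eqref{eq:V2}.

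For~\eqref{eq:V1}, I would exploit the pointwise identity $|(\partial_l \psi)^-(x)| = |\partial_l \psi(x)|$, which holds because the gauge factor $e^{-i(\Re\sqrt{z})^{1/2}\sgn(\Im\sqrt{z})|x|}$ in~\eqref{eq:auxiliary} has modulus one. Writing $|\nabla \psi|^2 = \sum_{l=1}^d |\partial_l \psi|^2 = \sum_{l=1}^d |(\partial_l\psi)^-|^2$, I then apply the classical Hardy inequality~\eqref{eq:Hardy} componentwise to each function $(\partial_l \psi)^- \in H^1(\R^d)$:
\begin{equation*}
	\int_{\R^d} \frac{|(\partial_l \psi)^-|^2}{|x|^2} \leq \frac{1}{C_\textup{H}} \int_{\R^d} |\nabla (\partial_l \psi)^-|^2.
\end{equation*}
Summing over $l = 1, \dots, d$ yields
\begin{equation*}
	\left\|\frac{\nabla \psi}{|x|}\right\|_2^2 \leq \frac{1}{C_\textup{H}} \sum_{l=1}^d \|\nabla (\partial_l \psi)^-\|_2^2.
\end{equation*}

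The final step is to pass from the $\ell^2$-sum on the right-hand side to the $\ell^1$-sum that appears in~\eqref{eq:V1}. For this I use the elementary inequality $\bigl(\sum_l x_l^2\bigr)^{1/2} \leq \sum_l x_l$, valid for non-negative reals, to deduce
\begin{equation*}
	\left\|\frac{\nabla \psi}{|x|}\right\|_2 \leq \frac{1}{\sqrt{C_\textup{H}}} \sum_{l=1}^d \|\nabla (\partial_l \psi)^-\|_2,
\end{equation*}
and combining with the initial reduction produces~\eqref{eq:V1}. There is no real obstacle in this argument; the only conceptual point worth flagging is the gauge-invariance of the modulus, which is what permits the Hardy inequality to be applied directly to the twisted derivatives $(\partial_l \psi)^-$ rather than to $\partial_l \psi$. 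The slight looseness introduced by $\ell^2 \hookrightarrow \ell^1$ is harmless since it is absorbed into the smallness condition~\eqref{eq:smallness-a}.
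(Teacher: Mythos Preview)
Your proof is correct and follows essentially the same route as the paper: start from the smallness hypothesis rewritten as $\||x|^2 V\psi\|_2\le a\,\|\nabla\psi/|x|\|_2$, then apply Hardy--Rellich for~\eqref{eq:V2} and componentwise Hardy (using $|(\partial_l\psi)^-|=|\partial_l\psi|$) for~\eqref{eq:V1}. The only cosmetic difference is that you make the final $\ell^2\hookrightarrow\ell^1$ step explicit, whereas the paper stops at the squared inequality $\||x|^2V\psi\|_2^2\le (a^2/C_\textup{H})\sum_l\|\nabla(\partial_l\psi)^-\|_2^2$ and leaves that passage implicit.
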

\begin{proof}
	The proof is not difficult 
	but we provide it here for sake of completeness. We start with showing~\eqref{eq:V1}. Using~\eqref{eq:smallness}, the fact that $|(\partial_j u)^-|=|\partial_j u|$ and the Hardy inequality~\eqref{eq:Hardy} one has
	\begin{equation*}
		\||x|^2 V\psi\|_2^2
		\leq a^2 \left \|\frac{\nabla \psi}{|x|^2} \right\|_2^2
		=a^2 \sum _{l=1}^d \left\| \frac{|(\partial_l \psi)^-|}{|x|^2}\right\|_2^2
		\leq \frac{a^2}{C_\textup{H}} \sum_{l=1}^d \|\nabla (\partial_l \psi)^-\|_2^2.
	\end{equation*} 
	As for~\eqref{eq:V2}, simply using~\eqref{eq:smallness} and the Hardy-rellich inequality~\eqref{eq:HR} we get
	\begin{equation*}
		\||x|^2 V\psi\|_2^2
		\leq a^2 \left \|\frac{\nabla \psi}{|x|^2} \right\|_2^2
		\leq \frac{a^2}{C_\textup{HR}} \|\Delta \psi\|_2^2.
		\qedhere
	\end{equation*} 
\end{proof}

Now we are in position to prove our result on the absence of eigenvalues, namely Theorem~\ref{thm:main-nsa}.
\begin{proof}[Proof of Theorem~\ref{thm:main-nsa}]
We assume by contradiction that there exists a non trivial 
(weak) solution $u\in H^2(\R^d)$ of the eigenvalue equation
\begin{equation}\label{eq:evs-eq-f}
	\Delta^2 u -zu=f,
	\qquad f:=-Vu.
\end{equation}
We consider separately the case when $z\in \mathcal{S}_\textup{pos},$ \emph{i.e.} $\Re z\geq 0$ and the case $z\in \mathcal{S}_\textup{neg},$ \emph{i.e.} $\Re z <0.$
If $z\in \mathcal{S}_\textup{pos}$ we use estimate~\eqref{eq:lr-bis} from Proposition~\ref{prop:apriori} and~\eqref{eq:V1} to have
\begin{equation*}
	\begin{split}
	\sum_{j=1}^d \|\nabla (\partial_j u)^-\|_2^2
	&\leq \frac{4d(d-3)}{(d-2)(d-4)} \frac{a}{\sqrt{C_\textup{H}}} \Big( \sum_{j=1}^d \|\nabla (\partial_j u)^-\|_2 \Big)^2\\
		&\qquad \qquad + \frac{4}{(d-4)\sqrt{(d-2)(d-4)}}\frac{a^{3/2}}{\sqrt{C_\textup{H}}^{3/2}} \Big( \sum_{l=1}^d \|\nabla (\partial_l u)^-\|_2 \Big)^{3/2} \sum_{j=1}^d \|\nabla (\partial_j u)^-\|_2^{1/2}\\
		&\leq \left[\frac{4d^2(d-3)}{(d-2)(d-4)} \frac{a}{\sqrt{C_\textup{H}}} + \frac{4d \sqrt{d}}{(d-4)\sqrt{(d-2)(d-4)}}\frac{a^{3/2}}{\sqrt{C_\textup{H}}^{3/2}} \right]\sum_{j=1}^d \|\nabla (\partial_j u)^-\|_2^2.
		\end{split}
\end{equation*}
Since $a$ satisfies~\eqref{eq:smallness-a} we get $u=0.$ Thus, there are no eigenvalues $z\in \mathcal{S}_\textup{pos}.$ Let us consider now $z\in \mathcal{S}_\textup{neg}.$ From~\eqref{eq:ud-bis} and~\eqref{eq:V2} we have 
\begin{equation*}
\|\Delta u\|_2 \leq \frac{a}{\sqrt{C_\textup{R}}\sqrt{C_\textup{HR}}} \|\Delta u\|_2.
\end{equation*}
Since $a$ satisfies~\eqref{eq:smallness-a}, in particular $a<\sqrt{C_\textup{R}}\sqrt{C_\textup{HR}},$ thus, as above, one concludes that $u=0.$
\end{proof}

\subsection{A priori and resolvent estimates: 
Proof of Theorems~\ref{thm:resolvent-est} and~\ref{thm:apriori}}
As already mentioned, Theorem~\ref{thm:resolvent-est} follows easily once Theorem~\ref{thm:apriori} is available. Thus we prove the a priori estimates stated in Theorem~\ref{thm:apriori} first.
\begin{proof}[Proof of Theorem~\ref{thm:apriori}]
	We start considering the case $z\in \mathcal{S}_\textup{pos}.$ Using estimate~\eqref{eq:lr-bis} in Proposition~\ref{prop:apriori} with $f$ replaced by $-Vu + f$ one has
	\begin{multline}\label{eq:apriori-final}
		(1-c_{a,d}) \sum_{j=1}^d \|\nabla(\partial_j u)^-\|_2^2\\
		\leq 
		\frac{4d(d-3)}{(d-2)(d-4)} \||x|^2f\|_2 \sum_{j=1}^d\|\nabla (\partial_j u)^-\|_2 
		+ \frac{4}{(d-4)\sqrt{(d-2)(d-4)}} \||x|^2f\|_2^{3/2}\sum_{j=1}^d\|\nabla (\partial_j u)^-\|_2^{1/2},
	\end{multline} 
	where the terms involving $V$ are estimated as in the proof of Theorem~\ref{thm:main-nsa} above, and the constant $c_{a,d}$ is taken from that proof, namely
	\begin{equation*}
		c_{a,d}
		=\frac{4d^2(d-3)}{(d-2)(d-4)} \frac{a}{\sqrt{C_\textup{H}}} + \frac{4d \sqrt{d}}{(d-4)\sqrt{(d-2)(d-4)}}\frac{a^{3/2}}{\sqrt{C_\textup{H}}^{3/2}},
	\end{equation*}
	with $a$ as in~\eqref{eq:smallness-a} and $C_\textup{H}$ is the Hardy constant (\emph{cfr.}~\eqref{eq:Hardy}). Given $\varepsilon, \delta>0$ to be chosen later, Young's inequality gives
	\begin{equation*}
		\||x|^2f\|_2 \sum_{j=1}^d\|\nabla (\partial_j u)^-\|_2 
		\leq \frac{\||x|^2f\|_2^2}{2\varepsilon^2} + \varepsilon^2 d \sum_{j=1}^d \|\nabla(\partial_ju)^-\|_2^2,
	\end{equation*}	
	and 
	\begin{equation*}
		\||x|^2f\|_2^{3/2} \sum_{j=1}^d\|\nabla (\partial_j u)^-\|_2^{1/2} 
		\leq \frac{3}{4} \frac{\||x|^2f\|_2^2}{\delta^{4/3}} + \frac{\delta^4}{4} d^2 \sum_{j=1}^d \|\nabla(\partial_ju)^-\|_2^2.
	\end{equation*}	
	Using the latter in~\eqref{eq:apriori-final} gives
	\begin{multline*}
		\left(1-c_{a,d}
		-\frac{4d(d-3)}{(d-2)(d-4)} \varepsilon^2 d
		-\frac{4}{(d-4)\sqrt{(d-2)(d-4)}} \frac{\delta^4}{4}d^2 
		\right)\sum_{j=1}^d \|\nabla(\partial_ju)^-\|_2^2\\
		\leq \left(\frac{4d(d-3)}{(d-2)(d-4)} \frac{1}{2\varepsilon^2}
		+ \frac{4}{(d-4)\sqrt{(d-2)(d-4)}} \frac{3}{4\delta^{4/3}} \right)\||x|^2f\|_2^2.
	\end{multline*}	
	If $\varepsilon$ and $\delta$ are small enough then one has the a priori estimate~\eqref{eq:lr}.
	
	We consider now the case $z\in \mathcal{S}_\textup{neg}.$ Using estimate~\eqref{eq:ud-bis} in Proposition~\ref{prop:apriori} with $f$ replaced by $-Vu + f$ and estimate~\eqref{eq:V2} one has
	\begin{equation*}
		\left(1-\frac{a}{\sqrt{C_\textup{R}}\sqrt{C_\textup{HR}}} \right) \|\Delta u\|_2\leq \frac{\||x|^2f\|_2}{\sqrt{C_\textup{R}}},
	\end{equation*}
	where $a$ is as in~\eqref{eq:smallness-a} and $C_\textup{R}$ and $C_\textup{HR}$ are the Rellich and Hardy-Rellich constants, respectively (\emph{cfr.}~\eqref{eq:Rellich} and~\eqref{eq:HR}). The last estimate gives immediately~\eqref{eq:ud}.
\end{proof}

Now we can prove the resolvent estimate contained in Theorem~\ref{thm:resolvent-est}.
\begin{proof}[Proof of Theorem~\ref{thm:resolvent-est}]
First of all, observe that from the Hardy inequality~\eqref{eq:Hardy} and the weighted Hardy inequality~\eqref{eq:weighted-Hardy} with $\gamma=-1$ one has
\begin{equation*}
	\sum_{j=1}^d \|\nabla(\partial_j u)^-\|_2
	\geq \sqrt{C_\textup{H}} \sum_{j=1}^d \left\|\frac{(\partial_j u)^-}{|x|}\right\|_2
	\geq \sqrt{C_\textup{H}}  \left\|\frac{\nabla u}{|x|}\right\|_2
	\geq \sqrt{C_\textup{H}} \frac{(d-4)}{2} \left \|\frac{u}{|x|^2} \right\|_2,
\end{equation*}
here we have also used that $|(\partial_j u)^-|=|\partial_j u|.$
If $z\in \mathcal{S}_\textup{pos},$ namely if $ \Re z\geq 0$ then from~\eqref{eq:lr} and the latter, there exists a constant $c>0$ such that  
\begin{equation*}
	\left \|\frac{u}{|x|^2} \right\|_2
	\leq c \||x|^2 f\|_2.
\end{equation*}
If $z\in \mathcal{S}_\textup{neg},$ namely if $z\in \Re z<0,$ from~\eqref{eq:ud} and the Rellich inequality~\eqref{eq:Rellich} one has
\begin{equation*}
	\sqrt{C_\textup{R}}\left \|\frac{u}{|x|^2} \right\|_2
	\leq \|\Delta u\|_2
	\leq c_{a,d} \||x|^2f\|_2,
\end{equation*}
with $c_{a,d}$ as in~\eqref{eq:cad}.
This concludes the proof.
\end{proof}

\section*{Acknowledgements}

L.C was supported by the grant Ramón y Cajal RYC2021-032803-I  funded by MCIN/AEI/10.13039/50110
\noindent
0011033 and by the European Union NextGenerationEU/PRTR, the Deutsche Forschungsgemeinschaft (DFG, German
Research Foundation) -- Project-ID 258734477 -- SFB 1173 and by Ikerbasque.

L.F. was supported by project PID2021-123034NB-I00/AEI/10.13039/501100011033 funded by the Agencia Estatal de Investigaci\'on (Spain), by the project IT1615-22 funded by the Basque Government, and by Ikerbasque.

D.K. was supported by the EXPRO grant No. 20-17749X
of the Czech Science Foundation.

\bibliography{spectralref04}
\bibliographystyle{abbrv} 

\end{document}